\newtheorem{theorem}{Theorem}[section]
\newtheorem{corollary}[theorem]{Corollary}
\newtheorem{proposition}[theorem]{Proposition}
\newtheorem{lemma}[theorem]{Lemma}
\newtheorem{conjecture}[theorem]{Conjecture}
\newtheorem{definition}[theorem]{Definition}
\newtheorem{example}[theorem]{Example}
\newtheorem{question}[theorem]{Question}
\long\def\symbolfootnote[#1]#2{\begingroup%
\def\thefootnote{\fnsymbol{footnote}}\footnote[#1]{#2}\endgroup}
\newcommand{\Ra}{\mathcal{R}}
\newcommand{\PSCA}{{\rm{PSCA}}}
\newcommand{\rv}{{\rm{rv}}}
\newcommand{\Cl}{{\rm{Cl}}}
\newcommand{\lsr}{\langle \sigma \rangle}
\newcommand{\PreserveBackslash}[1]{\let\temp=\\#1\let\\=\temp}
\newcolumntype{C}[1]{>{\PreserveBackslash\centering}p{#1}}
\newcolumntype{R}[1]{>{\PreserveBackslash\raggedleft}p{#1}}
\newcolumntype{L}[1]{>{\PreserveBackslash\raggedright}p{#1}}
\begin{document}

\title{A group-based structure for perfect sequence covering arrays}
\author{Jingzhou Na \and Jonathan Jedwab \and Shuxing Li}
\date{4 February 2022}
\maketitle

\symbolfootnote[0]{
Department of Mathematics,
Simon Fraser University, 8888 University Drive, Burnaby BC V5A 1S6, Canada.
\par
J.~Na is supported by a PhD Scholarship from the China Scholarship Council.
J.~Jedwab is supported by an NSERC Discovery Grant.
S.~Li is supported by a PIMS Postdoctoral Fellowship.
\par
Email:
{\tt jingzhou\_na@sfu.ca},
{\tt jed@sfu.ca},
{\tt shuxing\_li@sfu.ca}
}


\begin{abstract}
An $(n,k)$-perfect sequence covering array with multiplicity $\lambda$, denoted $\PSCA(n,k,\lambda)$, is a multiset whose elements are permutations of the sequence $(1,2, \dots, n)$ and which collectively contain each ordered length $k$ subsequence exactly $\lambda$ times.
The primary objective is to determine for each pair $(n,k)$ the smallest value of $\lambda$, denoted $g(n,k)$, for which a $\PSCA(n,k,\lambda)$ exists;
and more generally, the complete set of values~$\lambda$ for which a $\PSCA(n,k,\lambda)$ exists.
Yuster recently determined the first known value of $g(n,k)$ greater than~1, namely $g(5,3)=2$, and suggested that finding other such values would be challenging.
We show that $g(6,3)=g(7,3)=2$, using a recursive search method inspired by an old algorithm due to Mathon.
We then impose a group-based structure on a perfect sequence covering array by restricting it to be a union of distinct cosets of a prescribed nontrivial subgroup of the symmetric group~$S_n$. 
This allows us to determine the new results that $g(7,4)=2$ and $g(7,5) \in \{2,3,4\}$ and $g(8,3) \in \{2,3\}$ and $g(9,3) \in \{2,3,4\}$. 
We also show that, for each $(n,k) \in \{ (5,3), (6,3), (7,3), (7,4) \}$, there exists a $\PSCA(n,k,\lambda)$ if and only if $\lambda \ge 2$;
and that there exists a $\PSCA(8,3,\lambda)$ if and only if $\lambda \ge g(8,3)$.
\end{abstract}

\section{Introduction}
Throughout, let $k$ and $n$ be integers satisfying $2 \le k \le n$.
Let $S_n$ be the set of permutations of $[n] := \{1,2,\dots,n\}$,
and let $S_{n,k}$ be the set of $\frac{n!}{(n-k)!}$ ordered $k$-subsets of~$[n]$.
An \emph{$(n,k)$-perfect sequence covering array} with multiplicity $\lambda$, denoted \emph{$\PSCA(n,k, \lambda)$}, is a multiset $P$ with elements in $S_n$ such that each element of $S_{n,k}$ is a $k$-subsequence of exactly $\lambda$ elements of~$P$. Equivalently, regarding the elements of $P$ as $n$-sequences, we say that each element of $S_{n,k}$ is \emph{covered} by exactly $\lambda$ sequences of~$P$.
For example, the subset
\[
\{ 12345 ,\, 13254 ,\, 14523 ,\, 15432 ,\,
   24315 ,\, 25413 ,\, 34512 ,\, 35214 ,\,
   42513 ,\, 43215 ,\, 52314 ,\, 53412 
\}
\]
of $S_5$ is a $\PSCA(5,3,2)$.
If $P$ is a $\PSCA(n,k,1)$, then $P$ is a set (not a multiset). 
The size of the multiset of $k$-subsequences covered by a $\PSCA(n,k,\lambda)$ $P$ can be counted both as $|S_{n,k}|\lambda$ and as $\binom{n}{k}|P|$, from which we obtain the necessary condition $|P| = k! \lambda$.

Perfect sequence covering arrays are related to several other objects from combinatorial design theory.
A $\PSCA(n,k,\lambda)$ $P$ is equivalent to a $k$-$(n,n,\lambda)$ directed design~$([n],P)$ \cite[Sect.~VI.20]{BM07}. If a $\PSCA(n,k,\lambda)$ exists, then it achieves the largest possible size of a $k$-$(n,n,\lambda)$ directed packing \cite{MT99},~\cite{DSS84}. Replacing ``exactly $\lambda$ elements'' in the definition of a perfect sequence covering array by ``at least one element'' gives an $(n,k)$ sequence covering array, or equivalently a completely $k$-scrambling set of~$[n]$ \cite{Spe71}, \cite[Section~5]{Fur96}, \cite{Ish96}.
For a comprehensive study of constructions, nonexistence results, and search methods for sequence covering arrays, see Chee et al.\ \cite{CCHZ13}.
Sequence covering arrays are useful in various applications in which faults can arise when certain events occur in a particular order \cite{WSLK08,WLS+09,ARA09,HCM10,YM10,YCM11,KHL+12}. 
For example, the faults might be adverse reactions when a sequence of medications is taken in a certain order.
In order to determine whether faults arise under all possible ordered subsets of at most $k$ out of $n$ events, we require a set of tests in which each ordering of each subset of $k$ events occurs: this is given by an $(n,k)$-sequence covering array. 
A $\PSCA(n,k,1)$, if it exists, is the smallest possible size of an $(n,k)$-sequence covering array and so represents the most cost-efficient method of carrying out the required set of tests.

We define $g(n,k)$ to be the smallest $\lambda$ for which a $\PSCA(n,k,\lambda)$ exists. This value is well-defined, because $S_n$ is a trivial $\PSCA(n, k, \frac{n!}{k!})$ and so $g(n,k) \le \frac{n!}{k!}$ for all $k \le n$.
The central objective in the study of perfect sequence covering arrays is to determine, for each pair $(n,k)$, the value of $g(n,k)$ and more generally the complete set of values $\lambda$ for which there exists a $\PSCA(n,k,\lambda)$.
The current state of knowledge for $g(n,k)$ for small values of $n$ and $k$ is shown in \cref{tab:gvalues}.
We are concerned in this paper with exact values rather than asympotic bounds.

The rest of the paper is organized in the following way. 
\cref{sec:prev} describes previous results for the value of $g(n,k)$, including constructions, combinatorial nonexistence results, computer nonexistence results, and asymptotic results.
\cref{sec:genlam} describes a recursive algorithm for finding all possible examples of a $\PSCA(n,k,\lambda)$ without repeated elements, for arbitrary $\lambda \ge 1$.
\cref{sec:cosets} identifies a group-based structure shared by many examples of perfect sequence covering arrays, and modifies the search algorithm by prescribing this structure.
This allows us to determine new values or bounds for $g(n,k)$ for several pairs $(n,k)$. It also allows us to find examples of new parameter sets $(n,k,\lambda)$ for which a $\PSCA(n,k,\lambda)$ exists, providing evidence for a positive answer to a question posed by Charlie Colbourn (personal communication, Sept.\ 2021): does the existence of a $\PSCA(n,k,\lambda)$ imply the existence of a $\PSCA(n,k,\lambda+1)$?
\cref{sec:open} presents open problems arising from our results. 

The results of this paper are largely based on the Master's thesis of the first author \cite{na-masters}, which contains additional examples and visualizations.

\section{Previous results for $g(n,k)$}
\label{sec:prev}
In this section, we summarize previous results for the value of $g(n,k)$, including constructions, combinatorial nonexistence results, computer nonexistence results, and asymptotic results.

We begin with two trivial constructions.
\begin{lemma}
\label{lem:trivial}
Let $n \ge 2$. Then $g(n,2) = g(n,n) = 1$.
\end{lemma}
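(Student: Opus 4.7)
The plan is to verify both equalities by direct construction, since $\lambda \ge 1$ holds trivially from the definition (each element of $S_{n,k}$ must be covered at least once, forcing $\lambda \ge 1$), so it suffices in each case to exhibit a $\PSCA(n,k,1)$.

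For $g(n,n)=1$, I would simply take $P = S_n$. Each element of $S_{n,n}$ is an ordered permutation of $[n]$, and the only $n$-subsequence of an element of $S_n$ is that permutation itself; hence every element of $S_{n,n}$ is covered exactly once. This has size $n! = n! \cdot 1$, consistent with the necessary condition $|P| = n!\,\lambda$.

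For $g(n,2)=1$, I would take $P$ to consist of the identity permutation $(1,2,\dots,n)$ together with its reversal $(n,n-1,\dots,1)$. The identity covers precisely the ordered pairs $(i,j)$ with $i < j$, each once, while the reversal covers precisely the ordered pairs $(i,j)$ with $i > j$, each once. Thus every element of $S_{n,2}$ is covered exactly once, and $|P| = 2 = 2!\cdot 1$ matches the necessary condition.

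There is no real obstacle here; both constructions are immediate, and this lemma is essentially a warm-up recording the boundary cases $k=2$ and $k=n$ before the substantive values of $g(n,k)$ are discussed.
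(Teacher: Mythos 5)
Your proof is correct and uses exactly the same constructions as the paper: the pair $\{1 2 \cdots n,\ n \cdots 2 1\}$ for a $\PSCA(n,2,1)$ and the full set $S_n$ for a $\PSCA(n,n,1)$. You simply spell out the verification that the paper calls ``trivial,'' so there is nothing to add.
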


\begin{proof}
The set $\{1 2 \cdots n, \,\, n \cdots 2 1\}$ is trivially a $\PSCA(n,2,1)$, and so $g(n,2)=1$.
The set $S_n$ is trivially a $\PSCA(n,n,1)$, and so $g(n,n)=1$.
\end{proof}

The following composition construction is also trivial.
\begin{lemma}
\label{lem:union}
Suppose there exists a $\PSCA(n,k,\lambda)$ and a $\PSCA(n,k,\mu)$. Then their multiset union is a $\PSCA(n,k,\lambda+\mu)$.
\end{lemma}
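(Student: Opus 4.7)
The plan is to apply the definition of a $\PSCA$ directly and exploit the additivity of the covering count over a multiset union. Let $P_1$ be a $\PSCA(n,k,\lambda)$ and $P_2$ be a $\PSCA(n,k,\mu)$, and let $P$ denote their multiset union, so that the multiplicity of each permutation in $P$ is the sum of its multiplicities in $P_1$ and $P_2$.

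First I would fix an arbitrary element $s \in S_{n,k}$ and count, with multiplicity, the number of sequences in $P$ that cover $s$. Because multiplicities add under multiset union, this count equals the number of elements of $P_1$ covering $s$ plus the number of elements of $P_2$ covering $s$. By the defining property of $P_1$ and $P_2$ these counts are $\lambda$ and $\mu$ respectively, so the total for $P$ is $\lambda+\mu$. Since $s$ was arbitrary, every ordered $k$-subset of $[n]$ is covered exactly $\lambda+\mu$ times by $P$, which is precisely the definition of a $\PSCA(n,k,\lambda+\mu)$.

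There is no real obstacle here: the argument is a one-line observation about additivity of multiplicities. The only point worth flagging explicitly is that the union must be taken as a \emph{multiset} union rather than a set union, since a permutation occurring in both $P_1$ and $P_2$ must contribute to the covering count in $P$ with multiplicity $2$ rather than $1$; otherwise the resulting count would drop below $\lambda+\mu$ for those $k$-subsequences covered by the shared permutation. This is also consistent with the necessary size condition $|P|=k!(\lambda+\mu)$ derived in the introduction.
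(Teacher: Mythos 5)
Your proof is correct and is exactly the intended argument: the paper states this lemma without proof (calling it trivial), and the additivity of covering counts over a multiset union is the one-line observation being relied upon. Your remark that the union must be a multiset union is a worthwhile clarification consistent with the size condition $|P| = k!(\lambda+\mu)$.
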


The following two bounds are straightforward to prove.
\begin{lemma}
\label{lem:modify}
\mbox{}
\begin{enumerate}[$(i)$]
\item
Let $k \le n-1$. Then $g(n,k) \ge g(n-1,k)$.

\item
Let $k \ge 2$. Then $g(n,k) \ge \frac{1}{k}\,g(n,k-1)$.
\end{enumerate}
\end{lemma}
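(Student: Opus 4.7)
The plan is to prove each bound by starting from an optimal $\PSCA$ and producing a valid (smaller-parameter) $\PSCA$ that witnesses the claimed inequality; both arguments are structural rather than computational.

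For part $(i)$, I would let $P$ be a $\PSCA(n,k,g(n,k))$ and form the multiset $P'$ obtained by deleting the symbol $n$ from every element of $P$, so each element of $P'$ is a permutation of $[n-1]$. The key observation is that deleting one entry from a permutation does not alter the relative order of the remaining entries, so for any $\tau \in S_{n-1,k}$ and any $\pi \in P$, $\tau$ is a $k$-subsequence of $\pi$ if and only if it is a $k$-subsequence of the projected element $\pi'$ (this uses $k \le n-1$, so $\tau$ does not involve the symbol $n$). Hence $\tau$ is covered by exactly $g(n,k)$ elements of $P'$, so $P'$ is a $\PSCA(n-1,k,g(n,k))$ and $g(n-1,k) \le g(n,k)$.

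For part $(ii)$, I would show that any $\PSCA(n,k,\lambda)$ $P$ is automatically a $\PSCA(n,k-1,k\lambda)$; this immediately gives $g(n,k-1) \le k \cdot g(n,k)$. Fix $\tau \in S_{n,k-1}$ and let $N_\tau$ be the number of elements of $P$ that cover~$\tau$. I would double-count the set of pairs $(\pi,\sigma)$ with $\pi \in P$, $\sigma \in S_{n,k}$, $\tau$ a $(k-1)$-subsequence of $\sigma$, and $\sigma$ a $k$-subsequence of $\pi$. Summing first over $\pi$: each $\pi$ that covers~$\tau$ yields exactly $n-k+1$ such $\sigma$'s, obtained by adjoining any of the $n-k+1$ symbols of $[n]\setminus\tau$ in the relative position it occupies in $\pi$. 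Summing first over $\sigma$: there are $k(n-k+1)$ choices of $\sigma \supseteq \tau$ (choose the extra symbol in $n-k+1$ ways and its position among $\tau$'s entries in $k$ ways), and each is covered by exactly $\lambda$ elements of $P$. Equating gives $(n-k+1)N_\tau = k(n-k+1)\lambda$, hence $N_\tau = k\lambda$, independent of~$\tau$, as desired.

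Neither step is really an obstacle: part $(i)$ is a direct projection argument, and part $(ii)$ reduces to the double-counting identity above. The only place needing a moment of care is ensuring in part $(i)$ that $\tau$ never uses the deleted symbol $n$ (which is automatic since $\tau \in S_{n-1,k}$), and in part $(ii)$ that the bijection between "$\pi \in P$ covering $\tau$ plus an extra symbol" and "$\sigma$'s containing $\tau$ that are $k$-subsequences of $\pi$" is correct. With those points checked, both inequalities follow.
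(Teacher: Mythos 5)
Your proposal is correct and follows exactly the same route as the paper: part $(i)$ by deleting the symbol $n$ to project a $\PSCA(n,k,\lambda)$ to a $\PSCA(n-1,k,\lambda)$, and part $(ii)$ by observing that a $\PSCA(n,k,\lambda)$ is automatically a $\PSCA(n,k-1,k\lambda)$. The paper states both steps without detail, whereas you supply the (correct) projection and double-counting justifications; there is nothing to fix.
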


\begin{proof}
For $(i)$, delete the symbol $n$ from each sequence of a~$\PSCA(n,k,\lambda)$ to give a $\PSCA(n-1,k,\lambda)$.
For $(ii)$, regard a $\PSCA(n,k,\lambda)$ as a $\PSCA(n,k-1,k\lambda)$. 
\end{proof}

The following result was proved in terms of perfect codes capable of correcting single deletions.

\begin{theorem}[Levenshtein {\cite[Thm. 3.1]{Lev91}}]
\label{thm:lev}
The set $S_n$ can be partitioned into $n$ sets of sequences, each of which is a $\PSCA(n,n-1,1)$. Therefore $g(n,n-1) = 1$.
\end{theorem}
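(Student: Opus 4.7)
My plan is to produce the desired partition via a single combinatorial invariant. I would seek a function $T\colon S_n \to \mathbb{Z}/n\mathbb{Z}$ and set $P_t := T^{-1}(t)$ for each $t \in \mathbb{Z}/n\mathbb{Z}$; these sets partition $S_n$ by construction, so the task reduces to showing that each $P_t$ is a $\PSCA(n,n-1,1)$.

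The reduction is clean. Any ordered $(n-1)$-subsequence $\sigma \in S_{n,n-1}$ uniquely determines the missing symbol $m$ as the element of $[n] \setminus \{\sigma_1, \dots, \sigma_{n-1}\}$, so $\sigma$ is a subsequence of exactly $n$ permutations of $[n]$: for $j \in [n]$, let $\pi_j$ be the permutation obtained by inserting $m$ into $\sigma$ at position $j$. If the values $T(\pi_1), T(\pi_2), \dots, T(\pi_n)$ are pairwise distinct in $\mathbb{Z}/n\mathbb{Z}$, then exactly one $\pi_j$ lies in each $P_t$, so $P_t$ covers every $\sigma$ exactly once. A double count then forces $|P_t| = (n-1)!$, which matches the required size for a $\PSCA(n,n-1,1)$.

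The crux is therefore constructing $T$ so that the $n$ extensions $\pi_1, \dots, \pi_n$ of every $\sigma$ always populate all $n$ residues. A first natural candidate is the weighted position sum $T(\pi) := \sum_{i=1}^n i\,\pi_i$. A direct computation gives $T(\pi_{j+1}) - T(\pi_j) = m - \sigma_j$, and telescoping gives $T(\pi_n) - T(\pi_1) = nm - \binom{n+1}{2}$, together with closed forms for every intermediate difference. The main obstacle is choosing the right invariant or modulus: reducing modulo $n$ outright already fails for odd $n$, since the telescoped sum vanishes and forces $T(\pi_1) \equiv T(\pi_n) \pmod n$. Genuinely separating all $n$ extensions therefore requires a subtler invariant, for instance incorporating the inversion count of $\pi$ or a nonlinear correction term; this is the technical heart of Levenshtein's argument. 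Should a direct hash-based approach prove too delicate, I would retreat to an inductive construction: assuming a partition of $S_{n-1}$ into $n-1$ parts, each a $\PSCA(n-1,n-2,1)$, extend it to a partition of $S_n$ by inserting the symbol $n$ into each sequence at a position dictated by a compatible auxiliary invariant, then verify the $\PSCA$ condition on the extended blocks.
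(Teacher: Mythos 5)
Your reduction is sound and is indeed the right frame for Levenshtein's result: the $n$ permutations obtained by inserting the missing symbol into an $(n-1)$-subsequence $\sigma$ are exactly the permutations covering $\sigma$, so a map $T\colon S_n\to\mathbb{Z}/n\mathbb{Z}$ that separates these $n$ insertions for every $\sigma$ immediately yields the partition, and the count $|P_t|=(n-1)!=k!\lambda$ follows by double counting. (Note that the paper itself offers no proof of this theorem; it is cited from Levenshtein, so there is nothing to compare against except the original source.) The problem is that your proposal stops exactly where the theorem begins: you try one candidate invariant, $T(\pi)=\sum_i i\,\pi_i$, correctly show it fails for odd $n$, and then defer the construction to ``a subtler invariant, for instance incorporating the inversion count,'' which you acknowledge is ``the technical heart of Levenshtein's argument.'' That is a genuine gap, not a proof. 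Worse, the specific repair you gesture at cannot work on its own: moving $m$ from position $j$ to position $j+1$ changes the inversion count by exactly $\pm 1$, so the $n$ values of $\mathrm{inv}(\pi_j)$ form a walk with unit steps and can occupy $n$ distinct residues modulo $n$ only when the walk is monotone, i.e.\ only when $m\in\{1,n\}$.

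The invariant that actually works is the Varshamov--Tenengolts-type checksum on the ascent indicator vector: set $b_i(\pi)=1$ if $\pi_i<\pi_{i+1}$ and $0$ otherwise for $1\le i\le n-1$, and define $T(\pi)=\sum_{i=1}^{n-1} i\,b_i(\pi) \bmod n$. One must then verify that for each $\sigma$ the $n$ insertions of $m$ produce $n$ distinct values of $T$; this is the permutation analogue of the single-deletion-correction property of VT codes and requires a short but nontrivial argument tracking how the ascent vector changes as $m$ slides across $\sigma$. Your fallback inductive construction is likewise only a sketch: it presupposes a ``compatible auxiliary invariant'' dictating the insertion position of the symbol $n$, which is the same missing ingredient in different clothing. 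To complete the proof you must either exhibit $T$ explicitly and prove the separation property, or make the induction step concrete.
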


Levenshtein \cite[p.~140]{Lev90} conjectured in 1990 that the only values of $k$ for which $g(n,k) = 1$ are those provided by \cref{lem:trivial} and \cref{thm:lev}, namely 2, $n-1$, and~$n$. 
This was disproved by the following result.

\begin{proposition}[Mathon 1991, reported in {\cite[p.~191]{MT99}}]
\label{prop:641}
There exists a $\PSCA(6,4,1)$.
\end{proposition}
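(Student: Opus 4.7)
The necessary condition $|P| = k!\lambda = 24$ means we seek $24$ permutations of $[6]$ whose multiset of $4$-subsequences hits each of the $|S_{6,4}| = 360$ ordered $4$-tuples exactly once. A blind search over the $\binom{720}{24}$ candidate subsets of $S_6$ is hopeless, so I would impose a group-theoretic symmetry on $P$ to drastically cut the search space.

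Fix $\sigma = (1\,2\,3\,4\,5\,6) \in S_6$ and set $H = \langle \sigma \rangle$. The left multiplication action of $H$ on $S_6$ corresponds to applying $\sigma$ pointwise to each sequence, and since this commutes with the operation of extracting a $k$-subsequence, the induced action on $S_{6,4}$ preserves the multiset of subsequences covered by any $H$-invariant candidate $P$. Because no nontrivial power of a $6$-cycle has a fixed point in $[6]$, both actions are free: the $720$ permutations split into $120$ left cosets of $H$ of size $6$, and the $360$ ordered $4$-tuples split into $60$ orbits of size $6$. I therefore seek $P$ as a disjoint union of exactly four such cosets.

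The key reduction is the following. For any representative $\pi$, by $H$-invariance the coset $H\pi$ contributes each element of a fixed orbit $O \subset S_{6,4}$ with equal multiplicity $n_O(\pi) := |\{4\text{-subsequences of }\pi\} \cap O|$. The $\PSCA(6,4,1)$ condition then reads $\sum_{i=1}^{4} n_O(\pi_i) = 1$ for every orbit $O$, which forces each $n_O(\pi_i) \in \{0,1\}$ and partitions the $60$ orbits into four blocks of $15$. In particular, only ``admissible'' representatives $\pi$ whose $15$ subsequences lie in $15$ distinct orbits can appear; this rules out, for instance, the identity permutation, whose subsequences $(1,2,3,4)$, $(2,3,4,5)$, and $(3,4,5,6)$ all belong to the same orbit.

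I would carry out the search in two stages. First, sieve the $120$ cosets to identify the admissible ones and record each admissible $\pi$'s ``orbit support,'' a $15$-element subset of the $60$ orbits. Second, run a standard backtracking exact-cover routine (e.g.\ Knuth's Algorithm~X) to find four supports that partition the $60$ orbits. The main obstacle is not computational load -- the instance is tiny -- but the \emph{a priori} possibility that no such $4$-cover exists under this particular $H$; if the search fails, I would retry with other subgroups of $S_6$, ranging from alternative cyclic subgroups of order $6$ through the dihedral group of order $12$ (seeking $2$ cosets) up to a suitably embedded $S_4$ of order $24$ (just a single coset, yielding a highly symmetric design). Once a cover is found, verification is routine: enumerate the $24 \cdot 15 = 360$ covered subsequences and confirm that each element of $S_{6,4}$ appears exactly once.
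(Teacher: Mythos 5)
Your orbit reduction is sound as far as it goes: for a subgroup $H$ acting freely on $S_{6,4}$ by symbol relabelling, a union of $H$-orbits of sequences covers each element of an orbit $O\subset S_{6,4}$ with constant multiplicity, and the $\PSCA(6,4,1)$ condition becomes an exact cover of the $60$ orbits by the $15$-element orbit supports of four admissible representatives. This is essentially the compression the paper builds into its group-based search (\cref{def:IMgroup}, \cref{alg:subgroup}), modulo a clash of conventions: what you call left multiplication by $H$ is the paper's \emph{right} action (symbol relabelling), so your cosets are the paper's left cosets $\pi H$. The gap is that the statement is an existence claim and your argument never produces a witness; it establishes only ``if the exact-cover instance for some subgroup on my list is feasible, then a $\PSCA(6,4,1)$ exists.'' Worse, your primary instance is provably infeasible. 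A multiset $P$ is a union of such cosets of $H=\langle(1\,2\,3\,4\,5\,6)\rangle$ exactly when the $6$-cycle lies in the relabelling stabilizer of $P$; but by \cref{prop:641group} every $\PSCA(6,4,1)$ is isomorphic to $P_1$ (a subgroup $G_1\cong S_4$ of $S_6$, with stabilizer $G_1$) or to $P_2$ (three right cosets of a $D_8$, with automorphism group $D_8$), and neither $S_4$ nor $D_8$ contains an element of order $6$. The same observation rules out every cyclic group of order $6$ and the $D_{12}$ on your fallback list, since each contains an order-$6$ element that would have to lie in the stabilizer.

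Your search therefore succeeds only at the final fallback: a single coset of a \emph{suitably embedded} $S_4$, necessarily a transitive copy of $S_4$ in $S_6$ (the point-stabilizer copy fails at once, since all of its sequences end in $56$ and no subsequence of the form $(a,b,6,5)$ is ever covered). Such a copy acts freely on $S_{6,4}$ because no non-identity element fixes four points, the $360$ tuples fall into $15$ orbits of size $24$, and the single-coset condition is that the $15$ subsequences of the representative meet each orbit exactly once; the subgroup itself satisfies this, recovering Mathon's example $P_1$. To turn the proposal into a proof you must either exhibit the $24$ sequences and verify the covering property directly, or at least commit to the subgroup that works; as written, the existence claim rests on an unexecuted search whose first several stages are guaranteed to come up empty.
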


Mathon and van Trung showed by hand that $g(5,3)>1$ \cite[Thm~3.2]{MT99} (see \cite[p.~13]{na-masters} for a minor correction to the proof), and established the following nonexistence results by computer search.

\begin{proposition}[Mathon and van Trung {\cite[Sects.~4~\&~6]{MT99}}]
\label{prop:g74g75g86}
We have $g(7,4)>1$ and $g(7,5)>1$ and $g(8,6)>1$.
\end{proposition}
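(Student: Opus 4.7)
The plan is to prove each nonexistence claim by an exhaustive computer search that exploits the projection structure of \cref{lem:modify}(i). If $P$ is a $\PSCA(n,k,1)$, then deleting the symbol $n$ from every sequence of $P$ yields a $\PSCA(n-1,k,1)$ of the same cardinality $k!$. For each of the three parameter triples $(n,k) \in \{(7,4),(7,5),(8,6)\}$, such a base array is known to exist: \cref{prop:641} supplies $\PSCA(6,4,1)$, while \cref{thm:lev} (applied with $n=6$ and $n=7$) supplies $\PSCA(6,5,1)$ and $\PSCA(7,6,1)$. The task therefore reduces to enumerating all $\PSCA(n-1,k,1)$ up to isomorphism and, for each, enumerating all liftings to a putative $\PSCA(n,k,1)$, showing in every case that no lift exists.

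For a given base array $P' = \{\sigma_1,\dots,\sigma_{k!}\}$, I would run a backtracking search that processes the sequences $\sigma_i$ in turn and inserts the symbol $n$ into one of the $n$ available positions, pruning immediately whenever an ordered $k$-subsequence of $[n]$ containing $n$ would be covered a second time. A lift succeeds if and only if every ordered $k$-subsequence of $[n]$ containing $n$ is covered exactly once. To control the combinatorics, I would (i) first classify the base $\PSCA(n-1,k,1)$'s up to the natural relabelling action of $S_{n-1}$, (ii) fix the insertion position of $n$ in $\sigma_1$ to break part of the residual symmetry, and (iii) use forward checking on the still-uncovered ordered $k$-subsequences containing $n$ to cut branches as early as possible. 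A suitable canonical form on partial lifts can be used for further isomorph rejection among lifts of a single base.

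The main obstacle is computational scale, most acutely for $(n,k)=(8,6)$: there are $720$ sequences of length $8$, each admitting $8$ insertion positions, and $\frac{8!}{2!}=20160$ ordered $6$-subsequences containing the symbol $8$ that must each be covered exactly once. Even with aggressive propagation, this demands an efficient implementation, good isomorph rejection, and a careful enumeration of the Levenshtein $\PSCA(7,6,1)$'s that feed the search. The much smaller case $(7,4,1)$, with only $24$ sequences of length $7$ and $480$ ordered $4$-subsequences containing $7$, is a natural testbed; $(7,5,1)$ sits in between with $120$ sequences. Independently rerunning each search within the recursive framework of \cref{sec:genlam} would serve as verification, since for these nonexistence claims the algorithm terminates with an empty list of solutions.
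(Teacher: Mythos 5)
The paper does not prove this proposition itself: it cites the exhaustive backtracking searches of Mathon and van Trung (a direct search over $S_n$, one $n$-sequence at a time), independently re-verifies the $(7,5,1)$ and $(8,6,1)$ cases with \cref{alg:genPSCA} (see \cref{tab:timing}), and gives Klein's purely combinatorial argument for $g(7,4)>1$ as \cref{thm:klein}. Your projection-and-lifting reduction is a genuinely different computational route, and its core logic is sound: a $\PSCA(n,k,1)$ must project (after deleting the symbol $n$) onto a $\PSCA(n-1,k,1)$ with $k!$ \emph{distinct} sequences, the $k$-subsequences avoiding $n$ are automatically covered once by any lift, and relabelling by the stabilizer of $n$ in $S_n$ lets you restrict to one base per isomorphism class. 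Your coverage count for the lift ($24\cdot\binom{6}{3}=480$ for $(7,4)$) is also correct.

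However, there is a concrete flaw in your symmetry breaking, and it is fatal for a nonexistence claim. Once you have normalized the base array $P'$ to a canonical representative of its $S_{n-1}$-relabelling class, the only residual symmetry acting on the set of lifts is (essentially) $\mathrm{Aut}(P')\le S_{n-1}$, which permutes the sequences $\sigma_1,\dots,\sigma_{k!}$ among themselves and carries ``insert $n$ at position $p$ of $\sigma_i$'' to ``insert $n$ at position $p$ of $\sigma_j$''. No relabelling changes the insertion position of $n$ \emph{within} a fixed sequence, so your step (ii) --- fixing the insertion position of $n$ in $\sigma_1$ --- discards branches that are not equivalent to any retained branch. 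An exhaustive search pruned this way could terminate empty while a $\PSCA(n,k,1)$ exists, so it proves nothing. (You can legitimately fix the insertion position only if you instead normalize the lifted array to contain $12\cdots n$, but then the base must be enumerated over all arrays containing $12\cdots(n-1)$, not one per isomorphism class; you cannot have both normalizations at once.) A second, unaddressed obstacle is that for $(8,6)$ your method requires first classifying \emph{all} $\PSCA(7,6,1)$, not merely the seven members of Levenshtein's partition of $S_7$; that classification is itself a large exhaustive search whose feasibility you have not established, and omitting any base array again invalidates the nonexistence conclusion.
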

\noindent
Mathon and van Trung concluded that $4$ might be the only value of $k$ for which Levenshtein's conjecture fails. We express their revised conjecture in the following form, using \cref{lem:modify}~$(i)$.

\begin{conjecture}[Mathon and van Trung {\cite[p.~198]{MT99}}]
\label{conj:mvtequiv}
Let $k \not \in \{2,4\}$. Then~$g(k+2,k)>1$.
\end{conjecture}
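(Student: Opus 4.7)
Since the Mathon--van Trung conjecture remains open, any proof sketch is necessarily speculative. To rule out a $\PSCA(k+2, k, 1)$, one must show that no set $P \subseteq S_{k+2}$ of exactly $k!$ permutations can cover every ordered $k$-subset of $[k+2]$ exactly once. The global counts match trivially, since each $\pi \in P$ contributes $\binom{k+2}{k} = \binom{k+2}{2}$ ordered $k$-subsequences and $k! \cdot \binom{k+2}{2} = (k+2)!/2 = |S_{k+2,k}|$, so any obstruction must be local.

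My first line of attack would be a position--symbol counting argument. Define $N(a, i) = |\{\pi \in P : \pi(i) = a\}|$ for symbols $a$ and positions $i$ in $[k+2]$; the row and column sums force $(N(a,i))$ to be a doubly stochastic matrix scaled by $k!$. The $\PSCA$ condition imposes further constraints on the pair frequencies $|\{\pi \in P : \pi(i) = a,\, \pi(j) = b\}|$ for each ordered pair of distinct symbols $(a,b)$ and each pair of positions $i < j$, obtained by summing over the possible extensions to an ordered $k$-tuple. I would then look for divisibility or modular obstructions among these quantities that fail precisely when $k \notin \{2,4\}$.

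The central difficulty, and the reason the conjecture is so subtle, is the existence of Mathon's $\PSCA(6, 4, 1)$ of \cref{prop:641}: any combinatorial invariant used to rule out $k \neq 2,4$ must nevertheless be compatible with this sporadic example at $k = 4$. A purely numerical argument that treats all even $k$ uniformly therefore cannot succeed. This suggests looking instead at algebraic structure, for instance by showing that a hypothetical $\PSCA(k+2, k, 1)$ must admit a nontrivial automorphism under the simultaneous action of $S_{k+2}$ on symbols and positions, and then reducing to an enumeration of such group-structured candidates using the coset methods of \cref{sec:cosets}.

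The main obstacle, therefore, is the absence of a uniform structural reason why $k = 4$ should be the unique even exception. In the absence of such insight, the realistic fallback is to extend the verified cases $k \in \{3, 5, 6\}$ from \cref{prop:g74g75g86} one value at a time by targeted computer search; but the search space grows factorially in $k$ and is likely prohibitive for $k \geq 7$ without symmetry breaking well beyond what is developed in this paper.
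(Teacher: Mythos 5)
The statement you were asked to prove is a \emph{conjecture}, and the paper does not prove it: it explicitly records that the smallest open case remains $k=7$ (i.e.\ whether $g(9,7)>1$), and lists resolving that case as open problem $(vi)$. The only instances of the conjecture that are actually established anywhere in the paper are $k=3$ (Mathon and van Trung's hand argument that $g(5,3)>1$), $k=5$ and $k=6$ (the computer searches of \cref{prop:g74g75g86} giving $g(7,5)>1$ and $g(8,6)>1$). So there is no ``paper's own proof'' to compare against, and your submission --- which candidly opens by saying any sketch is necessarily speculative --- is not a proof either. Your counting check that $k!\binom{k+2}{2}=(k+2)!/2=|S_{k+2,k}|$ is correct and correctly identifies why no global counting argument can work, and your observation that any invariant must be compatible with Mathon's sporadic $\PSCA(6,4,1)$ at $k=4$ is a genuine and important constraint on any would-be proof.

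The concrete gaps are these. First, the proposed ``divisibility or modular obstruction'' on the position--symbol frequencies $N(a,i)$ and the pair frequencies is never exhibited; you would need to actually derive a congruence that fails for all $k\notin\{2,4\}$ yet holds at $k=4$, and nothing in your sketch produces one. Second, the claim that a hypothetical $\PSCA(k+2,k,1)$ ``must admit a nontrivial automorphism'' or a coset structure is unsupported --- the paper itself poses as open problem $(iv)$ whether every realizable parameter set admits a union-of-left-cosets example, so you cannot assume such structure without proof. Third, the fallback of extending the verified cases one at a time is exactly what the paper reports as infeasible: even $k=7$ defeats the recursive search methods here on memory grounds. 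In short, your proposal is an honest assessment of the difficulty rather than an argument, and should be presented as such rather than as a proof attempt.
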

\noindent        
More than twenty years after publication of \cite{MT99}, the smallest open case of \cref{conj:mvtequiv} remains~$k=7$.

The search result $g(7,4)>1$ stated in \cref{prop:g74g75g86} was established in 2004 via an elegant combinatorial proof that does not appear to have been widely recognized outside of the published context of perfect deletion-correcting codes. We therefore rephrase it here.
                    
\begin{theorem}[Klein {\cite[Thm 3.2]{Kle04}}]
\label{thm:klein}
We have $g(7,4)>1$.
\end{theorem}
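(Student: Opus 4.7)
The plan is to suppose for contradiction that a $\PSCA(7,4,1)$ $P$ exists; by the necessary condition $|P|=k!\lambda$ this forces $|P|=24$. The key structural observation I would establish first is that, for every $4$-subset $A\subseteq[7]$, the restriction map $\pi\mapsto\pi|_A$ from $P$ to the $24$ orderings of~$A$ is a bijection: it is injective because two distinct permutations in~$P$ cannot induce the same ordering of~$A$ without that ordering being covered twice, contradicting $\lambda=1$, and both sides have cardinality~$24$. An immediate corollary is that the restriction of $P$ to any $5$-subset is a $\PSCA(5,4,1)$, and to any $6$-subset is a $\PSCA(6,4,1)$ on that subset, so the Mathon-type configuration of \cref{prop:641} is forced on all seven $6$-element subsets of $[7]$ simultaneously.

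The next step is a double count on positions of a single symbol. For $a\in[7]$ and $i\in[7]$, set $q_i(a)=\#\{\pi\in P:\pi(i)=a\}$. Counting the covered ordered $4$-subseqs in which $a$ occupies each of the four possible roles of the subseq yields four linear equations in $q_1(a),\dots,q_7(a)$; for instance, counting $4$-subseqs of the form $(a,\ast,\ast,\ast)$ gives
\[
20\,q_1(a)+10\,q_2(a)+4\,q_3(a)+q_4(a)=120,
\]
and the three remaining roles produce the analogous identities with coefficient tuples $(10,12,9,4)$, $(4,9,12,10)$ and $(1,4,10,20)$ on four consecutive $q_i(a)$. Together with $\sum_i q_i(a)=24$ and the column constraints $\sum_{a} q_i(a)=24$, these carve out a small polytope of admissible position-incidence matrices.

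I would then refine the count to ordered pairs: viewing $P$ as a $\PSCA(7,2,12)$ via \cref{lem:modify}$(ii)$ forces each ordered pair of distinct symbols to appear as a subseq of exactly $12$ permutations of $P$, and parallel double counts on covered $4$-subseqs that contain a specified ordered pair $(a,b)$ yield further linear relations on the joint distribution $q_{ij}(a,b)=\#\{\pi:\pi(i)=a,\ \pi(j)=b\}$. The aim is to combine the rigid $4$-subset bijection, the forced $\PSCA(6,4,1)$ structure on each $6$-subset, and the single- and pair-symbol position equations to push the system until either it becomes infeasible over the non-negative integers, or a parity/divisibility invariant of the (essentially unique) Mathon $\PSCA(6,4,1)$ is revealed to be inconsistent across the seven $6$-subsets of $[7]$.

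The hard part will be the last step: the result is noteworthy precisely because it replaces a computer search by a short hand-checkable argument, so the bulk of the work lies in isolating the right combinatorial invariant rather than in the preliminary bookkeeping. I expect the setup above to be routine, and the genuine obstacle to be either a clever symmetry argument on how the seven $6$-subset restrictions of $P$ must interlock, or an unexpected modular identity on the pairwise counts that forces a non-integer value.
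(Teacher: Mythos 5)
There is a genuine gap: what you have written is a correct but routine setup followed by an explicit admission that the contradiction itself is still to be found. Your preliminary observations are all valid --- the restriction of $P$ to any $4$-subset is a bijection onto its $24$ orderings, the restrictions to $5$- and $6$-subsets are a $\PSCA(5,4,1)$ and a $\PSCA(6,4,1)$, and the position-count identities such as $20\,q_1(a)+10\,q_2(a)+4\,q_3(a)+q_4(a)=120$ are correct (the coefficients are $\binom{7-i}{3}$, etc.). But these global linear relations are satisfied by a nonempty polytope with many nonnegative integer points, so they cannot by themselves be infeasible; and the forced $\PSCA(6,4,1)$ structure on each $6$-subset, while true, is not accompanied by any identified invariant that would make the seven restrictions incompatible. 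You say yourself that the ``genuine obstacle'' is an unspecified symmetry argument or modular identity, which is precisely the part that constitutes the proof.

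The paper's argument (Klein's) supplies that missing step with a short local pigeonhole rather than global counting. Assume $1234567\in P$ and consider the set $T$ of other sequences of $P$ covering one of the seven $4$-subsequences $3124$, $1324$, $1243$, $2134$, $1342$, $2314$, $2341$ (equivalently, one of the $3$-subsequences $124$, $134$, $234$); since $P$ covers each of these seven $4$-subsequences exactly once and none is covered by $1234567$, we get $|T|\ge 7$. Because $1234567$ already covers every $4$-subsequence obtained by appending one of $5,6,7$ to an element of $\{124,134,234\}$, each sequence in $T$ must place all of $5$, $6$, $7$ before the symbol $4$. There are only $3!=6<7$ orderings of $\{5,6,7\}$, so two sequences of $T$ cover the same $4$-subsequence of the form (permutation of $5,6,7$ followed by $4$), a contradiction. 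If you want to complete your write-up, you need to either reproduce an argument of this localized type or actually exhibit the modular obstruction you conjecture; as it stands the proposal does not prove the statement.
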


\begin{proof}
Suppose, for a contradiction, that $P$ is a $\PSCA(7,4,1)$. We may assume after relabelling that $P$ contains the sequence~$1234567$. 
Let $T$ be the set of elements in $P \setminus \{1234567\}$ that contain one of the $3$-subsequences in the set
\[
U = \{124, 134, 234\},
\]
and let $T'$ be the set of elements in $P \setminus \{1234567\}$ that contain one of the $4$-subsequences in the set 
\[
U' = \{3124, 1324, 1243, 2134, 1342, 2314, 2341\}.
\]
It is easy to check that~$T=T'$.
By the PSCA property, the set $P \setminus \{1234567\}$ covers each element of $U'$, 
so there are at least $|U'| = 7$ elements in $T' = T$.

Now in the sequence $1234567 \in P$, each of the symbols 5, 6, 7 occurs after each of the $3$-subsequences in~$U$.
Therefore in every element of $T$, each symbol $5,6,7$ occurs before the symbol~$4$ (otherwise $P$ would cover some $4$-subsequence more than once).
Since there are at least $7$ elements of $T$, but there are only $3! < 7$ ways to order the symbols $5,6,7$ to occur before the symbol~$4$, we conclude that
there are at least two elements of $T$ covering the same $4$-subsequence (formed from some permutation of symbols $5,6,7$ followed by the symbol~$4$). This gives the required contradiction.
\end{proof}
            
The following nonexistence result was proved using matrix rank arguments and by reference to results on covering arrays such as~\cite{Col11}.

\begin{theorem}[Chee et al. {\cite[Thm 2.3]{CCHZ13}}]
\label{thm:2kk>1}
Let $k \ge 3$. Then $g(2k,k)>1$. 
\end{theorem}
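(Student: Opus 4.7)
The plan is to proceed by contradiction. Suppose $P$ is a $\PSCA(2k,k,1)$, so $|P|=k!$ by the necessary count, and view $P$ as a $k!\times 2k$ matrix $M$ whose rows are permutations. The value $\lambda=1$ is very restrictive, and the goal is to exhibit an algebraic invariant whose dimension forces $|P|>k!$ whenever $k\ge 3$.

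First I would extract every balance condition implied by $\lambda=1$. Iterating Lemma \ref{lem:modify}$(ii)$ shows that $P$ is simultaneously a $\PSCA(2k,j,k!/j!)$ for every $2\le j\le k$. At level $j=2$ this says that each unordered pair of distinct symbols appears in both orders equally often across the $k!$ rows of $M$, and at higher $j$ it says the relative orderings of each $j$-subset are equidistributed. A similar count applied to the position of a single fixed symbol gives a system of linear relations on the ``position-occupancy'' numbers that any row structure compatible with $P$ must satisfy.

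Second, I would translate these balance conditions into a rank inequality, in the spirit of the reference to Colbourn \cite{Col11}. One natural encoding is the map $\pi\mapsto v_\pi\in\{+1,-1\}^{\binom{2k}{2}}$ sending $\pi$ to its pairwise-comparison sign vector; the level-$2$ balance forces $\sum_{\pi\in P}v_\pi=0$, and higher-level balance conditions give analogous vanishing sums of tensor-like lifts. Alternatively, the $k$-subsequence coverage packages $M$ as a perfect covering array of strength linked to $k$ on $2k$ columns, to which a lower bound from the covering-array literature can be applied. In either framing, the aim is to produce a bound strictly greater than $k!$ in the specific regime $n=2k$, $k\ge 3$.

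The main obstacle is identifying the precise invariant or covering-array reformulation that isolates the coincidence $n=2k$, rather than giving a uniform bound for all $n$. The argument has to exploit the fact that a $k$-subset of $[2k]$ and its complement both have size $k$, which is what makes the perfect coverage at level $k$ so rigid, and it must fail to extend to $n>2k$, where the analogous statement for $k=7$ is still a wide-open case of Conjecture \ref{conj:mvtequiv}. Pinning down this rank computation and aligning it with an applicable covering-array bound is the technical heart of the proof I would expect to write.
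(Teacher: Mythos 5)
There is a genuine gap: what you have written is a research plan, not a proof. You correctly set up the counting ($|P| = k!$), correctly observe via \cref{lem:modify}$(ii)$ that a $\PSCA(2k,k,1)$ is simultaneously a $\PSCA(2k,j,k!/j!)$ for each $2 \le j \le k$, and correctly guess the general flavour of the argument (the paper itself only cites the result from Chee et al.\ and describes their proof as using ``matrix rank arguments and by reference to results on covering arrays such as~\cite{Col11}''). But every step that would actually produce a contradiction is deferred: you do not identify the invariant, you do not state which covering-array bound applies or why it exceeds $k!$, and you explicitly flag that locating where the hypothesis $n=2k$ enters is ``the main obstacle.'' The one concrete relation you do write down, $\sum_{\pi\in P} v_\pi = 0$ for the pairwise-comparison sign vectors, follows from the level-$2$ balance alone and holds for \emph{any} perfect sequence covering array with the appropriate multiplicity, for every $n$; it cannot by itself distinguish $n=2k$ from $n>2k$, so it cannot be the engine of the contradiction.

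To turn this into a proof you would need to supply exactly the piece you acknowledge is missing: a linear-algebraic quantity attached to $P$ (in Chee et al.\ this is built from the higher-level balance conditions, in the spirit of Wilson's inclusion-matrix rank formula~\cite{Wil90}, together with a strength-$2$ covering-array lower bound) whose rank or size is provably larger than $k!$ precisely when $n=2k$ and $k\ge 3$. As it stands, nothing in the proposal rules out the existence of a $\PSCA(2k,k,1)$, and indeed your own framework would have to be checked against the known existence of a $\PSCA(6,4,1)$ (\cref{prop:641}, where $n = k+2$ rather than $2k$) to confirm that whatever bound you derive genuinely uses $n=2k$ and not merely $\lambda = 1$.
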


Although our interest in this paper is in determining the exact value of $g(n,k)$ for small $n$ and~$k$, we summarize in \cref{thm:yus1,thm:yus2} below the best known asymptotic bounds on the growth rate of $g(n,k)$ as $n$ and~$k$ grow. 
These results improve on previous asymptotic results for completely scrambling sets \cite{Spe71,Ish96,Fur96,Rad03}.

\cref{thm:yus1} holds for general $k$, and was proved by combining
combinatorial arguments with a result due to Wilson \cite[Thm.~1]{Wil90} on the rank of a set inclusion matrix over a finite field. 

\begin{theorem}[Yuster {\cite[Thm.~1]{Yus20}}]
\label{thm:yus1}
Let $k \ge 4$ be an integer.
\begin{enumerate}[$(i)$]
\item
If $k/2$ is a prime, then for all $n \ge k$ we have
\[
\displaystyle{ g(n,k) \ge \frac{\binom{n}{k / 2} - \binom{n}{k / 2 - 1}}{k!}.}
\]
        
\item
Let $n$ and $k$ grow such that $n \gg k$. Then $g(n,k) > n^{k/2-o_k(1)}$ (where $o_k(1)$ represents a function that approaches $0$ as $k \to \infty$).
\end{enumerate}
\end{theorem}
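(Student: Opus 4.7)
The plan is to combine a direct counting identity for perfect sequence covering arrays with Wilson's rank formula for set-inclusion matrices. Let $P$ be a PSCA$(n,k,\lambda)$ with $|P|=k!\lambda$, and set $t=k/2$, which is prime by hypothesis; the relevant field will be $\mathbb{F}_p$ with $p=t$. The goal is the size bound $|P|\ge\binom{n}{t}-\binom{n}{t-1}$, after which part $(i)$ follows by dividing by $k!$.

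The combinatorial step is to observe that for any pair of disjoint $t$-subsets $A,B\subseteq[n]$, the number of permutations $\pi\in P$ in which every element of $A$ appears before every element of $B$ equals $(t!)^2\lambda$: there are $(t!)^2$ ordered $k$-subsequences of the form (ordering of $A$)(ordering of $B$), and each is covered by exactly $\lambda$ permutations of $P$. Summing over all $B$ disjoint from $A$ then yields the vector identity
\[
\sum_{\pi\in P}\binom{n-\alpha_A(\pi)}{t}\;=\;\lambda\,(t!)^2\binom{n-t}{t}\qquad\text{for each $t$-subset $A$,}
\]
where $\alpha_A(\pi)$ denotes the latest position of an element of $A$ in the permutation $\pi$.

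The second step is to convert this identity into a rank bound. For each $\pi$, the row vector $z_\pi=\bigl(\binom{n-\alpha_A(\pi)}{t}\bigr)_A$ lies in the image of the set-inclusion matrix $W_{t,k}$ of $t$-subsets versus $k$-subsets of $[n]$, since $z_\pi(A)$ counts exactly the $k$-subsets $K\supseteq A$ for which $A$ occupies the initial $t$ positions of $\pi|_K$. Wilson's theorem, applied with $t=p$ prime, isolates inside the image of $W_{t,k}$ an $\mathbb{F}_p$-submodule of rank $\binom{n}{t}-\binom{n}{t-1}$ corresponding to the top isotypic component of the $S_n$-permutation representation on $t$-subsets. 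Projecting the identity onto this submodule forces the vectors $\{z_\pi:\pi\in P\}$ to span the full top component modulo $p$, from which $|P|\ge\binom{n}{t}-\binom{n}{t-1}$ is immediate.

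For part $(ii)$, apply part $(i)$ with $p$ chosen to be the largest prime at most $k/2$. By Bertrand's postulate (or the prime number theorem) $p=(k/2)(1-o_k(1))$, and for $n\gg k$ the quantity $\binom{n}{p}-\binom{n}{p-1}$ is of order $n^p/p!$; dividing by $k!$ yields the stated growth rate $n^{k/2-o_k(1)}$. The principal difficulty is the rank step of part $(i)$: Wilson's formula for the raw $\mathbb{F}_p$-rank of $W_{t,k}$ need not itself equal $\binom{n}{t}-\binom{n}{t-1}$, so one must work inside the correct $S_n$-isotypic component and verify that the combinatorial identity has a nonzero projection onto precisely that component when $t$ is prime, using the explicit description of the submodule provided by Wilson's result together with Lucas' theorem on the $p$-adic valuation of the binomials $\binom{k-i}{t-i}$ appearing in the rank formula.
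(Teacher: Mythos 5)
The paper does not prove \cref{thm:yus1}: it is quoted from Yuster with only the remark that the proof ``combines combinatorial arguments with a result due to Wilson on the rank of a set inclusion matrix over a finite field.'' So your proposal has to stand on its own, and as written its central step has a genuine gap. The counting identity is correct: for disjoint $t$-subsets $A,B$ (with $t=k/2$) exactly $(t!)^2\lambda$ members of $P$ place all of $A$ before all of $B$, and summing over $B$ gives $\sum_{\pi\in P}\binom{n-\alpha_A(\pi)}{t}=\lambda(t!)^2\binom{n-t}{t}$ for every $A$. But the rank step does not follow from it. The right-hand side is a constant vector, i.e.\ it lies in the \emph{trivial} isotypic component of the permutation module on $t$-subsets; projecting the identity onto the top component of dimension $\binom{n}{t}-\binom{n}{t-1}$ therefore yields only $\sum_{\pi}\mathrm{proj}(z_\pi)=0$, which is consistent with every $z_\pi$ projecting to zero and cannot ``force'' the vectors $z_\pi$ to span that component. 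Since the desired conclusion $|P|\ge\binom{n}{t}-\binom{n}{t-1}$ is precisely the assertion that some $|P|$-column matrix has $p$-rank at least that large, the spanning claim carries the entire weight of the proof, and no argument for it is given (your final paragraph essentially concedes this).

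Two further problems. First, Wilson's theorem is aimed at the wrong matrix: for $t=p$ an odd prime and $k=2p$, Lucas's theorem gives $\binom{2p-i}{p-i}\equiv 1$ or $2\pmod p$ for every $0\le i\le p$, so Wilson's diagonal form shows $W_{t,k}$ has \emph{full} $p$-rank $\binom{n}{t}$; its image mod $p$ is the whole space and isolates no distinguished submodule of rank $\binom{n}{t}-\binom{n}{t-1}$. (Moreover, the stated reason that $z_\pi$ lies in the image of $W_{t,k}$ --- that $z_\pi(A)$ counts certain $k$-sets containing $A$ --- only exhibits $z_\pi$ as $N_\pi\mathbf{1}$ for a $0$--$1$ matrix $N_\pi$ dominated entrywise by $W_{t,k}$, not as $W_{t,k}c$ for some coefficient vector $c$.) Second, in part $(ii)$ you cannot ``apply part $(i)$ with $p$ the largest prime at most $k/2$'' directly, since $(i)$ requires $k/2$ itself to be prime; you first need $g(n,k)\ge\frac{(2p)!}{k!}\,g(n,2p)$ by iterating \cref{lem:modify}~$(ii)$, after which Bertrand's postulate and $n\gg k$ do give $n^{k/2-o_k(1)}$. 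That part of the outline is repairable; the rank argument in $(i)$ is the step that needs a genuinely different mechanism --- some matrix built from $P$ in which each permutation contributes boundedly to the $p$-rank while the total $p$-rank is forced to be at least $\binom{n}{t}-\binom{n}{t-1}$.
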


\cref{thm:yus2} holds for the case $k=3$. The proof of the upper bound arises from a recursive construction that builds a $\PSCA(n^2,3,2(n+1)\lambda)$ from a $\PSCA(n,3,\lambda)$, using a finite affine plane of order~$n$ where $n$ is a power of $3$. 

\begin{theorem}[Yuster {\cite[Thm.~2]{Yus20}}]
\label{thm:yus2}
Let $n \ge 3$. Then $n / 6 \le g(n, 3) \le C n (\log_2 n)^{\log_2 7}$ for some absolute constant~$C$.
\end{theorem}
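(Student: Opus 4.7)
My plan is to handle the two bounds separately.

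\textbf{Lower bound $g(n,3) \ge n/6$.} For a $\PSCA(n,3,\lambda)$ $P$ we have $|P|=6\lambda$, so it suffices to prove $|P| \ge n$. My first attempt would be a double-counting argument. For each symbol $x \in [n]$, letting $a_x^{(p)}$ be the number of sequences of $P$ placing $x$ in position $p$, the PSCA condition applied to ordered triples with $x$ as the first entry gives the identity $\sum_{p=1}^{n} a_x^{(p)}\binom{n-p}{2} = \lambda(n-1)(n-2)$, and analogous identities hold with $x$ in the middle or last position. I would combine these with $\sum_p a_x^{(p)} = |P|$ and average over $x \in [n]$ in the hope of forcing $|P| \ge n$. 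If plain counting proves too weak, the fallback is to emulate the linear-algebraic rank approach behind \cref{thm:yus1}, specialised to the case $k = 3$.

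\textbf{Upper bound: the affine-plane construction.} Identify $[n^2]$ with the points of the affine plane $\mathrm{AG}(2,n)$, which exists whenever $n$ is a prime power and in particular whenever $n$ is a power of $3$. The plane has $n+1$ parallel classes of lines, each partitioning the $n^2$ points into $n$ disjoint lines of $n$ points. For each parallel class $\Pi$ and each ordered pair $(\tau_1, \tau_2) \in P \times P$ of permutations drawn from a fixed $\PSCA(n,3,\lambda)$ $P$, build a permutation of $[n^2]$ by using $\tau_1$ to order the $n$ lines of $\Pi$ and using $\tau_2$ to order the $n$ points within each line, then concatenating. Taking the resulting multiset across all $n+1$ parallel classes with appropriate multiplicity should yield a $\PSCA(n^2, 3, 2(n+1)\lambda)$. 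The verification is a case analysis on how the three target points distribute over a given parallel class: a collinear triple is handled only by the unique class containing its line, triples with two points on one line are handled by a restricted family of classes, and triples in general position pick up balanced contributions from essentially all $n+1$ classes. Showing that the contributions total exactly $2(n+1)\lambda$ in every case is the main combinatorial obstacle.

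\textbf{Upper bound: iteration and interpolation.} Starting from the trivial $\PSCA(3,3,1) = S_3$ and iterating the construction produces a $\PSCA(3^{2^t}, 3, \lambda_t)$ with $\lambda_t = \prod_{i=0}^{t-1} 2(3^{2^i}+1)$. Taking logarithms and telescoping the geometric series shows $\lambda_t = O(n_t (\log n_t)^c)$ for some absolute constant $c$, where $n_t = 3^{2^t}$. For arbitrary $n \ge 3$, I would apply \cref{lem:modify}(i) with $m$ the smallest value of the form $3^{2^t}$ satisfying $m \ge n$; since the previous power satisfies $3^{2^{t-1}} < n$, we have $m < n^2$, so the interpolation blow-up must be absorbed into the polylogarithmic factor. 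A careful accounting of the number of squaring steps together with the doubling of the multiplicative constants at each level should produce the explicit exponent $\log_2 7$ claimed in the theorem.
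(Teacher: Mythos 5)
First, a framing point: the paper does not prove \cref{thm:yus2}; it is imported from Yuster's paper, and the only guidance given is that the upper bound comes from a recursion turning a $\PSCA(n,3,\lambda)$ into a $\PSCA(n^2,3,2(n+1)\lambda)$ via an affine plane of order $n$. Measured against that target, your proposal has concrete gaps on both sides.

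For the lower bound, your counting identities are correct but provably insufficient: summing $\sum_p a_x^{(p)}\binom{n-p}{2}=\lambda(n-1)(n-2)$ over $x\in[n]$ just returns $|P|\binom{n}{3}=6\lambda\binom{n}{3}$, i.e.\ the size count you already know, and applying convexity to these identities gives an inequality that holds for every $|P|$. A linear bound needs a rank (Fisher-type) argument, e.g.: fix a symbol $x$, let $B_\pi$ be the set of symbols preceding $x$ in $\pi$, and observe that the $\PSCA$ property forces the $(n-1)\times|P|$ incidence matrix $N$ of this set system to satisfy $NN^T=\lambda I+2\lambda J$, which is positive definite; hence $|P|\ge\mathrm{rank}\,N=n-1$, and a short comparison of row and column sums rules out $|P|=n-1$. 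Your ``fallback'' gestures at this, but it is not a fallback --- it is the entire proof.

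For the upper bound, your construction uses all $|P|^2$ pairs $(\tau_1,\tau_2)$ per parallel class, i.e.\ $(n+1)(6\lambda)^2$ permutations in total, whereas a $\PSCA(n^2,3,2(n+1)\lambda)$ must have exactly $12(n+1)\lambda$ elements; the counts are incompatible. (One can check that your $P\times P$ version is in fact a $\PSCA(n^2,3,6(n+1)\lambda^2)$ --- the case analysis balances --- but the multiplicity is quadratic in $\lambda$, and iterating $\lambda\mapsto 6(n+1)\lambda^2$ makes $\lambda$ grow far faster than $n$, which cannot yield an $O(n\,\mathrm{polylog}\,n)$ bound.) The recursion needs only $2|P|$ permutations per parallel class: for each $\tau\in P$, one permutation ordering the lines by $\tau$ and the points within every line by $\tau$, and a second ordering the lines by $\tau$ and the points within every line by the \emph{reversal} of $\tau$. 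This reversal pairing is the missing idea: writing $a<_\tau b$ when $a$ precedes $b$ in $\tau$, a pair $x,y$ on a common line $L_i$ with $z$ on $L_j$ is then covered $\#\{\tau:i<_\tau j,\,x<_\tau y\}+\#\{\tau:i<_\tau j,\,y<_\tau x\}=\#\{\tau:i<_\tau j\}=3\lambda$ times by that class, which is exactly what balances against the $2\lambda$ contributed by each generic class. (Also, a collinear triple is covered by all $n+1$ classes, not only by the class containing its line.) Finally, your interpolation fails: the least $m=3^{2^t}$ with $m\ge n$ can be as large as $n^2$, and a factor of $n$ cannot be absorbed into a polylogarithmic term. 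The rounding to a power of $3$ must be done at every level of the recursion (build on $m^2\ge n$ points for $m$ the least admissible value with $m^2\ge n$, then delete points via \cref{lem:modify}$(i)$); the careful per-level accounting of that loss is what produces an exponent such as $\log_2 7$.
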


In 2020, Yuster \cite{Yus20} determined that $g(5,3)=2$ by exhibiting a $\PSCA(5,3,2)$. This was the first exact value of $g(n,k)$ greater than~$1$ to be determined. To describe how this result was found, we introduce two definitions.

\begin{definition}
\label{def:IM}
The \emph{$(n,k)$-incidence matrix} is the $n! \times \frac{n!}{(n-k)!}$ array whose rows are indexed by the elements of $S_n$, whose columns are indexed by the elements of $S_{n,k}$, and whose $(x,y)$ entry is $1$ if $x \in S_n$ covers $y \in S_{n,k}$ and is $0$ otherwise.
\end{definition}
\noindent
Each row sum of the $(n,k)$-incidence matrix is $\binom{n}{k}$ and each column sum is~$\frac{n!}{k!}$.

\begin{definition}
\label{def:rvnseq}
Let $X$ be a multiset with elements in~$S_n$.
The \emph{repetition vector of~$X$} with respect to~$k$, written $\rv_k(X)$, is the sum of the rows of the $(n,k)$-incidence matrix that are indexed by the elements of~$X$.
\end{definition}
\noindent
The vector $\rv_k(X)$ has length $\frac{n!}{(n-k)!}$, and its $j^\text{th}$ entry is the number of sequences in $X$ that cover $k$-subsequence~$j$. The multiset $X$ is therefore a $\PSCA(n,k,\lambda)$ if and only if each entry of $\rv_k(X)$ is~$\lambda$.

We can now describe the $\PSCA(5,3,2)$ found by Yuster. 
We shall reinterpret this example in \cref{sec:motivation}.

\begin{example}[Yuster {\cite[Prop.~3.4]{Yus20}}]
\label{ex:532}
Let 
\[
X = \{12345, 43215, 35214, 14523, 25413, 53412\}.
\]
Then the length $60$ vector $\rv_3(X)$ has four entries $0$ (in the positions indexed by the $3$-subsequences $132$, $231$, $154$, $451$), four entries $2$ (in the positions indexed by the $3$-subsequences $123$, $321$, $145$, $541$), and the remaining $52$ entries~$1$. 

Let $\sigma = 13254 \in S_5$ and write $X\sigma = \{x\sigma : x \in X\}$, where
we follow the convention that the composition of permutations $\pi, \sigma$ given by $\pi \sigma$ represents the action of $\pi$ followed by~$\sigma$. Then the repetition vector of
\[
X \sigma = \{13254, 52314, 24315, 15432, 34512, 42513\}
\]
has the same property as $\rv_3(X)$, but the positions in $\rv_3(X\sigma)$ of the $0$ and $2$ entries are interchanged with those in~$\rv_3(X)$. This ensures that every entry of $\rv_3(X) + \rv_3(X\sigma)$ is~$2$, and therefore that $X \cup X\sigma$ is a $\PSCA(5,3,2)$.
Since $g(5,3)>1$ {\rm \cite[Thm.~3.2]{MT99}}, we conclude that $g(5,3)=2$.
\end{example}
\noindent
The original motivation for this paper was the challenge provided by Yuster's concluding statement \cite[p.~592]{Yus20} that
\begin{quote}
``Proving additional exact values of $g(n, k)$ which are not of unit multiplicity in addition to $g(5, 3)$ also seems challenging''.
\end{quote}

\cref{tab:gvalues} summarizes all previously known exact values of $g(n,k)$ for small $n$ and~$k$. 

\begin{table}[htbp]
\centering
\caption{Previously known exact values of $g(n,k)$ for small $n$ and $k$, and their sources.
(s1): \cref{lem:trivial}.
(s2): \cref{thm:lev}.
(s3): \cref{ex:532}.
(s4): \cref{prop:641}.
(s5): \cref{prop:g74g75g86}.
(s6): \cref{thm:klein}.
(s7): \cref{lem:modify}~$(i)$.
}
\label{tab:gvalues}
\vspace{1em} 
\begin{tabular}{c|*{6}{C{1.7cm}}}
\toprule
\diagbox{$n$}{$k$} 
	& 2		& 3		& 4			& 5		& 6		& 7		\\ \hline \hline
2	& 1 (s1)	& \multicolumn{5}{l}{} \\ \cline{1-2}
3	& 1 (s1)	& 1 (s1)	& \multicolumn{4}{l}{} \\ \cline{1-3}
4	& 1 (s1)	& 1 (s2)	& 1 (s1)		& \multicolumn{3}{l}{} \\ \cline{1-4}
5	& 1 (s1)	& 2 (s3)	& 1 (s2)		& 1 (s1)	& \multicolumn{2}{l}{} \\ \cline{1-5}
6	& 1 (s1)	& $\ge 2$ (s7)	& 1 (s4)		& 1 (s2)	& 1 (s1)	& \\ \cline{1-6}
7	& 1 (s1)	& $\ge 2$ (s7)	& $\ge 2$ (s5,s6)	& $\ge 2$ (s5)	& 1 (s2)	& 1 (s1)	\\ \hline
8	& 1 (s1)	& $\ge 2$ (s7)	& $\ge 2$ (s7)		& $\ge 2$ (s7)	& $\ge 2$ (s5)	& 1 (s2)	\\ \hline
9	& 1 (s1)	& $\ge 2$ (s7)	& $\ge 2$ (s7)		& $\ge 2$ (s7)	& $\ge 2$ (s7)	& ?     	\\ \hline
\bottomrule
\end{tabular}
\end{table}

\section{Recursive search algorithm for $\PSCA(n,k,\lambda)$} 
\label{sec:genlam}
In this section, we describe a recursive algorithm for finding all possible examples of a $\PSCA(n,k,\lambda)$ without repeated elements, for arbitrary $\lambda \ge 1$. The algorithm is a tree search that attempts to build a $\PSCA(n,k,\lambda)$ one $n$-sequence at a time without covering any $k$-subsequence more than $\lambda$ times, backtracking when this is not possible. 
Although we have followed Yuster \cite[p.~586]{Yus20} in allowing a perfect sequence covering array to be a multiset, we consider that in many respects it is more natural to restrict the definition of a $\PSCA(n,k,\lambda)$ to a set and have therefore formulated the algorithm to exclude repeated elements. For $\lambda = 1$, this restriction is redundant: if the algorithm terminates without output for parameters $(n,k,1)$, then we can conclude that no $\PSCA(n,k,1)$ exists and so $g(n,k)>1$.

\subsection{Idea of algorithm}
Let $A = (a_{x,y})$ be the $(n,k)$-incidence matrix (where $x \in S_n$ and $y \in S_{n,k}$).
We wish to construct a $\PSCA(n,k,\lambda)$ by finding a $(k!\lambda)$-subset $X$ of $S_n$ for which each entry of $\rv_{k}(X)$ is~$\lambda$, which is equivalent by \cref{def:IM} to
\[
\sum_{x \in X} a_{x,y} = \lambda \quad \mbox{for each~$y$}.
\]
We initialize $X$ to be empty. We then add one element of $S_n$ to $X$ at a time, subject to the condition that
\begin{equation}
\label{eqn:condre}
\sum_{x \in X} a_{x,y} \le \lambda \quad \mbox{for each~$y$}.
\end{equation}
The algorithm succeeds in finding a $\PSCA(n,k,\lambda)$ if $|X|$ reaches~$k!\lambda$. If it is not possible to add an element of $S_n$ to $X$ subject to~\eqref{eqn:condre}, we backtrack.

We keep track of two sets, $Y$ and $L$.
The set $Y$ contains the $k$-subsequences not yet covered $\lambda$ times by the $n$-sequences in~$X$, namely the values $y$ for which $\sum_{x \in X} a_{x,y} < \lambda$.
The set $L$ contains the candidates for enlarging $X$, namely the $n$-sequences that do not cover a $k$-subsequence already covered $\lambda$ times.
We also keep track of the repetition vectors $R = \rv_{k}(X) = (\sum_{x \in X} a_{x,y}) = (r_y)$ 
and $M = \rv_{k}(L) = (\sum_{\ell \in L} a_{l,y}) = (m_y)$, although only at the positions $y$ indexed by~$Y$.
We seek to enlarge $X$ so that the vector $R$ attains the value $\lambda$ in every position. 
Each position of the vector $M$ contains the largest amount by which $R$ can be increased in that position (if every candidate in $L$ were added to~$X$).

Suppose that $X$ currently contains fewer than $k!\lambda$ elements.
If there is a $k$-subsequence that cannot be covered $\lambda$ times, even by adding every candidate in $L$ to~$X$ (that is, $r_y+m_y < \lambda$ for some $y \in Y$), then we terminate the branch early and backtrack.
Otherwise, we find the set $Y'$ of $y \in Y$ for which $r_y$ attains its maximum value (that is, the $k$-subsequences that are not yet covered $\lambda$ times but are closest to being so).
At the next iteration of the algorithm, we choose one $y \in Y'$ and recurse by attempting to add to $X$ each of the elements of 
$\{\ell \in L : a_{l,y} = 1 \}$ in turn (each such addition causing the $k$-subsequence $y$ to be covered one more time).
In order to reduce the number of tree branches that must be searched, we choose a value of $y \in Y'$ for which 
$|\{\ell \in L : a_{l,y} = 1 \}| = m_y$ is minimized: call this value~$y^*$.
Although the choice of $y^*$ is restricted to the subset $Y'$ of $Y$, and might not be unique within $Y'$, the algorithm is exhaustive over the possibilities for incrementing the value of~$r_{y^*}$. 
Since we require each entry of $r_y$ for $y \in Y$ to eventually reach the value $\lambda$, 
the algorithm finds every possible example of a $\PSCA(n,k,\lambda)$ regardless of the sequence of values~$y^*$ chosen.

For each $x \in L$ in turn for which $a_{x,y^*} = 1$, we update the sets 
$X, Y, L$ and the repetition vectors $R, M$ and then recurse. 
We update $X$ by adding the $n$-sequence~$x$.
We update $Y$ by removing all $k$-subsequences newly covered $\lambda$ times.
We update $R$ by adding the row of $A$ indexed by $x$ to it.
We update $L$ by removing $x$ (so that the same $n$-sequence $x$ cannot be added to $X$ a second time) and by removing each $n$-sequence covering a $k$-subsequence that is newly covered $\lambda$ times (because the later inclusion of such an $n$-sequence in $X$ would violate~\eqref{eqn:condre}).
We update $M$ by subtracting the rows of $A$ that have just been removed from~$L$.

Pseudocode implementing this search procedure is given in \cref{alg:genPSCA}.
By calling the procedure SEARCH with 
$X = \emptyset$ and $Y = S_{n,k}$ and $R = (0,\dots,0)$ and $L = S_n$ and $M = (\frac{n!}{k!}, \dots, \frac{n!}{k!})$, we obtain every possible $\PSCA(n,k,\lambda)$ without repeated elements.
Furthermore, we may assume after relabelling symbols that the perfect sequence covering array contains the sequence $12 \cdots n$.
We therefore replace, for the first iteration of \cref{alg:genPSCA}, the set $L(y^*)$ at Line $18$ by the single-element set $\{12 \cdots n\}$.

\begin{algorithm}
\caption{Tree search for $\PSCA(n,k,\lambda)$ by backtracking}
\label{alg:genPSCA}
\begin{algorithmic}[1]
\Require PSCA parameters $(n,k,\lambda)$ and $A = \big(a_{x,y} : x \in S_n, \, y \in S_{n,k} \big)$ = $(n,k)$-incidence matrix 
\Procedure{SEARCH}{$X, Y, R, L, M$}
    \State \Comment{\%~$X$ = sequences of partial $\PSCA(n,k,\lambda)$ $P$}
    \State \Comment{\%~write $(r_y) = \rv_{k}(X) = (\sum_{x \in X}~a_{x,y})$ = sum of rows of $A$ indexed by~$X$}
    \State \Comment{\%~$Y = \{y \in S_{n,k} : r_y < \lambda\}$ = $k$-subsequences not yet covered $\lambda$ times by $P$}
    \State \Comment{\%~$R = (r_y : y \in Y)$ = entries of $\rv_{k}(X)$ indexed by $Y$}
    \State \Comment{\%~$L = \{\ell \not \in X: a_{\ell,y} = 0$ for all $y \not \in Y\}$ = candidates for enlarging~$X$}
    \State \Comment{\%~write $(m_y) = \rv_{k}(L) = (\sum_{\ell \in L} a_{\ell,y})$ = sum of rows of $A$ indexed by~$L$}
    \State \Comment{\%~$M = (m_y: y \in Y)$ = entries of $\rv_{k}(L)$ indexed by~$Y$}

    \If {$|X| = k!\lambda$}
        \State record $X$ as a $\PSCA(n,k,\lambda)$.
        \State \Return
    \EndIf

    \If{$r_y + m_y < \lambda$ for some $y \in Y$}
        \State \Return  \Comment{\quad \% terminate branch early}
    \EndIf

    \State let $Y'$ be the set of $y \in Y$ for which $r_y$ attains its maximum value.
    \State choose an arbitrary $y = y^* \in Y'$ for which $m_y$ attains its minimum value.
    \State $L(y^*) \coloneqq \{ x \in L : a_{x,y^*} = 1\}$
    \For {\textbf{each} $x \in L(y^*)$}
        \State $X_{\text{new}} \coloneqq X \cup \{x\}$
        \State $Y_{\text{new}} \coloneqq Y \setminus \{y : r_y + a_{x,y} = \lambda\}$
        \State $R_{\text{new}} \coloneqq (r_y + a_{x,y} : y \in Y_\text{new})$
        \State $B \coloneqq \{x\} \cup \{\ell \in L:$ $a_{\ell,y} = 1$ for at least one $y \in Y \setminus Y_\text{new}\}$ 
        \State $L_{\text{new}} \coloneqq L \setminus B$
        \State $M_{\text{new}} \coloneqq (m_y - \sum_{\ell \in B}a_{\ell,y} : y \in Y_\text{new})$
        \State SEARCH$(X_{\text{new}}, Y_{\text{new}}, R_{\text{new}}, L_{\text{new}}, M_{\text{new}})$
    \EndFor
\EndProcedure
\end{algorithmic}
\end{algorithm}

\subsection{Discussion}
The nonexistence results for a $\PSCA(n,k,1)$ given in \cref{prop:g74g75g86} were obtained by Mathon and van Trung using a search algorithm for a directed $t$-packing \cite[p.~163]{MT99}, that in turn relies on a algorithm due to Mathon~\cite{Mat97} for finding spreads in an incidence structure.
A key feature of this algorithm is that each successive iteration removes rows and columns of the incidence matrix describing the incidence structure, leading to reduced time and space complexity.
Mathon noted \cite[p.164]{Mat97} that
``An actual implementation of this algorithm requires good data structures to facilitate fast and efficient computations in, and updating of the various point and line sets'', without explicitly describing these data structures.

\cref{alg:genPSCA} for finding a $\PSCA(n,k,\lambda)$ without repeated elements is inspired by Mathon's algorithm, and reduces to a broadly equivalent form in the special case $\lambda = 1$.
In particular, the idea of improving efficiency by restricting the search to the subset $Y'$ of $Y$ in \cref{alg:genPSCA} is taken directly from Mathon's paper~\cite{Mat97}.
The cases $\lambda > 1$ do not have a corresponding form in the context of a directed $t$-packing and so were not considered in~\cite{Mat97}. 
\cref{alg:genPSCA} also contains a feature not described in \cite{Mat97} or \cite{MT99} that leads to a significant speed advantage for all cases $\lambda \ge 1$: keeping track of the vector $M$ and passing it as a recursion parameter, which avoids having to calculate $|Y|$ column sums over $|L|$ rows when carrying out the early termination test at Line~13.
The calculations in Lines 13, 16, 17 can be accomplished in linear time with a single pass through the positions indexed by~$Y$.

The space complexity of \cref{alg:genPSCA} is determined by the representation of the $(n,k)$-incidence matrix. 
There is no need to store this matrix explicitly as an $n! \times \frac{n!}{(n-k)!}$ matrix over $\{0,1\}$: it is sufficient to store the positions of the 1 entries in each row, and the positions of the 1 entries in each column.

\subsection{New values for $g(n,k)$}
\cref{alg:genPSCA} finds the following examples of a $\PSCA(6,3,2)$ and a~$\PSCA(7,3,2)$. 
By reference to \cref{tab:gvalues}, this gives the new results $g(6,3)=g(7,3)=2$.

\begin{proposition}
\label{prop:newg-gen}
\mbox{}
\begin{enumerate}[$(i)$]
\item
The following $12$ sequences form a $\PSCA(6,3,2)$
\[
    \begin{array}{*{6}{c}}
123456 & 154326 & 216543 & 245613 & 354162 & 361452 \\
423165 & 461325 & 516234 & 532614 & 632541 & 645231
    \end{array}
\]
and therefore $g(6,3)=2$.

\item
The following $12$ sequences form a $\PSCA(7,3,2)$
\[
    \begin{array}{*{6}{c}}
1234567 & 1573426 & 3275641 & 3617524 & 4261735 & 4756123 \\
5164327 & 5243176 & 6257314 & 6345721 & 7216453 & 7431625
    \end{array}
\]
and therefore $g(7,3)=2$.
\end{enumerate}
\end{proposition}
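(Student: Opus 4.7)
The statement has two parts: we must exhibit a $\PSCA(6,3,2)$ and a $\PSCA(7,3,2)$, and then deduce the exact values of $g(6,3)$ and $g(7,3)$. My plan is verification combined with an already-tabulated lower bound.

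For each of the two listed multisets $P$, I would first note that $|P| = 12 = 3!\cdot 2$, matching the necessary count $|P| = k!\lambda$ recorded in the introduction. I would then verify that $\rv_3(P)$, as in \cref{def:rvnseq}, is the all-$2$ vector, by running through each of the $n(n-1)(n-2)$ ordered triples in $S_{n,3}$ (that is, $120$ triples for $n=6$ and $210$ for $n=7$) and counting how many of the twelve $n$-sequences of $P$ cover it. Equivalently, using the notation of \cref{def:IM}, check that the sum of the twelve rows of the $(n,3)$-incidence matrix indexed by $P$ equals $(2,2,\dots,2)$. This is a finite mechanical check, best delegated to a computer; the listed arrays do not exhibit any obvious group symmetry (the structured examples are deferred to \cref{sec:cosets}), so I would not expect a neat combinatorial shortcut.

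Once the upper bounds $g(6,3) \le 2$ and $g(7,3) \le 2$ are in hand, the matching lower bounds follow from \cref{tab:gvalues}: the Mathon--van Trung result $g(5,3) > 1$, combined with \cref{lem:modify}~$(i)$ applied twice, gives $g(6,3) \ge g(5,3) \ge 2$ and $g(7,3) \ge g(6,3) \ge 2$, yielding $g(6,3) = g(7,3) = 2$.

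The only real difficulty is locating the two arrays in the first place. I would run \cref{alg:genPSCA} with parameters $(n,3,2)$ for $n \in \{6,7\}$, using the first-iteration specialization $L(y^*) = \{12 \cdots n\}$, and retain the first output produced. The main obstacle is controlling the size of the search tree, which grows rapidly with $n$; here the early-termination test at Line~13 and the minimum-$m_y$ choice of $y^*$ at Line~16 are the features that keep the computation tractable at these parameters. For larger $n$ or larger $\lambda$, the general-purpose search of \cref{alg:genPSCA} is expected to run out of steam, which is precisely the motivation for the group-structured refinement pursued in the remainder of the paper.
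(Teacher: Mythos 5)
Your proposal matches the paper's own treatment essentially exactly: the paper gives no written proof beyond stating that \cref{alg:genPSCA} produced the two arrays and that the conclusion follows ``by reference to \cref{tab:gvalues}'', which is precisely your argument of mechanically checking that $\rv_3(P)$ is the all-$2$ vector together with the lower-bound chain $g(7,3) \ge g(6,3) \ge g(5,3) \ge 2$ from \cref{lem:modify}~$(i)$ and the Mathon--van Trung result. (One minor factual slip that does not affect the proof: your claim that these arrays exhibit no group symmetry is contradicted in \cref{sec:motivation}, where both are rewritten as unions of six left cosets of an order-$2$ subgroup.)
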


\section{PSCA$(n,k,\lambda)$ as union of cosets of a prescribed subgroup}
\label{sec:cosets}
In this section, we identify an algebraic structure shared by many examples of perfect sequence covering arrays. We then modify \cref{alg:genPSCA} to search for perfect sequence covering arrays having this prescribed structure, thereby determining new values or bounds for $g(n,k)$ for several pairs~$(n,k)$. The algorithm also finds examples of new parameter sets $(n,k,\lambda)$ for which a $\PSCA(n,k,\lambda)$ exists. As in \cref{sec:genlam}, we restrict the search to perfect sequence covering arrays without repeated elements.

\subsection{Motivation}
\label{sec:motivation}
The $\PSCA(5,3,2)$ given in \cref{ex:532} was constructed by Yuster as $X \cup X\sigma$ for a $6$-subset $X$ of~$S_5$ and a permutation $\sigma \in S_5$.
We can equivalently interpret this perfect sequence covering array as the union of six left cosets of the order $2$ subgroup $\lsr$ of $S_5$, by reading the following table not by rows (as $X \cup X\sigma$) but by columns (as $\bigcup_{x \in X} \, x \lsr$, where $X$ is a set of left coset representatives for~$\lsr$).

\begin{center}
\begin{tabular}{c||cccccc}
    \multicolumn{7}{c}{$\PSCA(5,3,2)$ with $\sigma = 13254$} \\
    \toprule
    & $\lsr$ & $43215 \, \lsr$ & $35214 \, \lsr$ & $14523 \, \lsr$ & $25413 \, \lsr$ & $53412 \, \lsr$ \\ \hline \hline
    $X$       &  12345 &   43215    &   35214    &   14523    &   25413    &   53412    \\
    $X\sigma$ &  13254 &   52314    &   24315    &   15432    &   34512    &   42513    \\
    \bottomrule
\end{tabular}
\end{center}

We can likewise reinterpret the $\PSCA(6,3,2)$ and $\PSCA(7,3,2)$ of \cref{prop:newg-gen}, found using \cref{alg:genPSCA}, as the union of six left cosets of a suitable order $2$ subgroup $\lsr$ of~$S_6$ and $S_7$, respectively.

\begin{center}
\begin{tabular}{c||cccccc}
    \multicolumn{7}{c}{$\PSCA(6,3,2)$ with $\sigma = 154326$} \\
    \toprule
    & $\lsr$ & $216543 \, \lsr$ & $354162 \, \lsr$ & $461325 \, \lsr$ & $532614 \, \lsr$ & $645231 \, \lsr$ \\
    \hline
    \hline
    $X$       & 123456 & 216543 & 354162 & 461325 & 532614 & 645231 \\
    $X\sigma$ & 154326 & 516234 & 423165 & 361452 & 245613 & 632541 \\
    \bottomrule
\end{tabular}
\end{center}

\begin{center}
\begin{tabular}{c||cccccc}
    \multicolumn{7}{c}{$\PSCA(7,3,2)$ with $\sigma = 4261735$} \\
    \toprule
    & $\lsr$ & $1573426 \, \lsr$ & $3275641 \, \lsr$ & $3617524 \, \lsr$ & $5164327 \, \lsr$ & $5243176 \, \lsr$ \\ \hline \hline
    $X$       & 1234567 & 1573426 & 3275641 & 3617524 & 5164327 & 5243176 \\
    $X\sigma$ & 4261735 & 4756123 & 6257314 & 6345721 & 7431625 & 7216453 \\
    \bottomrule
\end{tabular}
\end{center}

This motivates us to seek a $\PSCA(n,k,\lambda)$ as the union of $\frac{k!\lambda}{|G|}$ distinct cosets of a nontrivial subgroup $G$ of $S_n$, where the parameter $\lambda$ need not take the value~$2$; the subgroup $G$ need not be cyclic, nor have order~$2$;
and we can choose either right cosets of a subgroup or left cosets.
However, we shall see in \cref{sec:cosets} that the use of left cosets leads to a simplification in the search (using conjugacy classes), and a richer existence pattern (involving a larger subgroup $G$).

We note that a single $\PSCA(n,k,\lambda)$ can admit more than one representation as a union of distinct cosets of a subgroup of~$S_n$. 
For example, the $\PSCA(6,3,2)$ represented above as six left cosets of the order~$2$ subgroup $\langle 154326 \rangle$
can also be represented as the entire order~$12$ subgroup $\langle 154326, 216543 \rangle \cong D_{12}$.
Likewise, the $\PSCA(7,3,2)$ represented above as 
six left cosets of the order~$2$ subgroup $\langle 4261735 \rangle$ can also by represented as
six right cosets of the order~$2$ subgroup $\langle 3617524 \rangle$, and as
two left cosets of the order~$6$ subgroup $\langle 3275641, 4261735 \rangle \cong S_3$.

We remark that several aspects of our approach can be recognized in Mathon and van Trung's work~\cite{MT99}. We note in particular the following refinement of \cref{prop:641}, found by computer search. We write $D_{2n}$ for the dihedral group of order~$2n$.

\begin{proposition}[Mathon and van Trung {\cite[Thm 4.1]{MT99}}]
\label{prop:641group}
Up to isomorphism, there are exactly two examples $P_1, P_2$ of a $\PSCA(6,4,1)$:
\begin{enumerate}[$(i)$]
\item 
the $24$ sequences of $P_1$ comprise a subgroup $G_1 \cong S_4$ of~$S_6$; the automorphism group of $P_1$ is isomorphic to~$S_4$.

\item
the $24$ sequences of $P_2$ comprise a union of three right cosets of a subgroup $G_2 \cong D_8$ of~$S_6$;
the automorphism group of $P_2$ is isomorphic to~$D_8$. (See {\rm \cite[p.~29]{na-masters}} for a correction to the listing of $P_2$ in {\rm \cite[p.~192]{MT99}}.)
\end{enumerate}
\end{proposition}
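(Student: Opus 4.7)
The plan is to reduce the claim to a finite computation based on \cref{alg:genPSCA}. I would invoke the procedure SEARCH with parameters $(n,k,\lambda) = (6,4,1)$, seeded so that the partial solution is forced to contain $123456$, as described at the end of \cref{sec:genlam}. This enumerates every $\PSCA(6,4,1)$ that contains the sequence $123456$; because any $\PSCA(6,4,1)$ $P$ can be relabelled (replaced by $P\sigma$ for a suitable $\sigma \in S_6$) so as to contain $123456$, the output captures at least one representative from every isomorphism class. The search space here is small enough that termination and an exact enumeration are readily within reach.

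Next I would partition this output into isomorphism classes. Under the composition convention of \cref{sec:prev}, relabelling of symbols by $\sigma \in S_6$ corresponds precisely to right multiplication $P \mapsto P\sigma$, and reversal of each sequence provides an additional PSCA-preserving symmetry. Two elements $P, P'$ of the list are therefore isomorphic iff $P' \in \{P\sigma, P^R\sigma : \sigma \in S_6\}$, which I would check by computing a canonical form (for instance the lexicographic minimum over all $1440$ such images). The expected outcome is exactly two equivalence classes, with representatives $P_1$ and $P_2$ matching those in the statement.

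The algebraic claims are then verified directly on each representative. For $P_1$, I would check closure under composition by multiplying every pair of elements (as permutations) and confirming that the product lies in $P_1$; closure together with $|P_1| = 24$ and $123456 \in P_1$ identifies $P_1$ with a subgroup $G_1 \le S_6$, and the isomorphism type $G_1 \cong S_4$ follows either by matching the cycle-type distribution of $G_1$ with that of $S_4$ or by exhibiting explicit generators $\alpha, \beta \in G_1$ satisfying the presentation $\langle \alpha, \beta \mid \alpha^3 = \beta^2 = (\alpha\beta)^4 = 1 \rangle$. For $P_2$, I would search among products $\{xy^{-1} : x,y \in P_2\}$ for an order $8$ dihedral subgroup $G_2$ of $S_6$, verify via the presentation $\langle r,s \mid r^4, s^2, (rs)^2 \rangle$ that $G_2 \cong D_8$, and then check that there exist $g_1, g_2 \in P_2$ with $P_2 = G_2 g_0 \cup G_2 g_1 \cup G_2 g_2$ as a disjoint union of right cosets. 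Finally, the automorphism groups are enumerated as $\mathrm{Aut}(P_i) = \{\sigma \in S_6 : P_i \sigma = P_i\}$: for $P_1$, any such $\sigma$ satisfies $\sigma = 1 \cdot \sigma \in P_1\sigma = P_1$, forcing $\mathrm{Aut}(P_1) = G_1 \cong S_4$, while $\mathrm{Aut}(P_2) \cong D_8$ follows from a short explicit enumeration over the $720$ candidates $\sigma$.

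The main obstacle I expect is the isomorphism-classification step rather than the raw search. The output of \cref{alg:genPSCA} may contain many relabelled copies of each non-isomorphic PSCA (up to $720 / |\mathrm{Aut}(P_i)|$ copies each before accounting for reversal), so careful canonicalisation is needed both to collapse the list correctly and to rule out any third equivalence class. Also delicate is the bookkeeping around the conventions: because relabelling acts on the right while the group structure in \cref{prop:641group}~$(ii)$ is phrased in terms of right cosets, one must take care that the ``origin'' chosen for $P_2$ (that is, which element is treated as the coset $G_2 \cdot 1$) is compatible with the relabelled form produced by the search. Once these conventions are fixed, everything that remains is a routine algebraic verification on a finite list of $24$-sequence PSCAs.
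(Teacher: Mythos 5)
The paper does not actually prove this proposition: it is imported verbatim from Mathon and van Trung \cite[Thm 4.1]{MT99}, who established it by exhaustive computer search, and the present authors only note a correction to the published listing of $P_2$. Your plan --- enumerate all $\PSCA(6,4,1)$ containing $123456$ with \cref{alg:genPSCA} (valid, since \cref{cor:relabel} lets you normalise any example by a right translation), collapse the output to canonical forms, and then verify the group-theoretic claims directly on the two survivors --- is the same kind of finite computation, and the algebraic verifications you describe are sound: in particular the observation that $1 \in P_1$ and $P_1\sigma = P_1$ force $\sigma \in P_1$, so that $\mathrm{Aut}(P_1) = G_1$, is exactly right, and identifying $S_4$ via the von Dyck presentation $\langle \alpha,\beta \mid \alpha^3=\beta^2=(\alpha\beta)^4=1\rangle$ together with $|G_1|=24$ is legitimate.

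The one genuine point of concern is your isomorphism relation. You quotient by $P' \in \{P\sigma, P^R\sigma : \sigma \in S_6\}$, but the standard notion of isomorphism for directed designs (and the one in force in \cite{MT99}) is relabelling of points only, i.e.\ $P' = P\sigma$; reversal produces the \emph{converse} design, which is generally treated as a distinct (possibly non-isomorphic) object. Including $P^R$ coarsens the equivalence, so finding two classes under your relation does not by itself certify ``exactly two up to isomorphism'' in the intended sense --- each of your classes could a priori split into a relabelling class and its converse. You would need to check additionally that $P_i^R$ is a relabelling of $P_i$ (equivalently, for $P_1 = G_1$, that the reversal permutation normalises $G_1$), or simply run the canonicalisation with relabelling alone. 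This is a fixable bookkeeping issue rather than a flaw in the overall strategy, but as written the classification step does not quite establish the stated count.
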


\subsection{Left or right cosets}
\label{sec:LR}
We next show how, for a perfect sequence covering array comprising a union of distinct cosets of a nontrivial subgroup, there is a fundamental distinction between left and right cosets.
We firstly note that the right action of a permutation on a subset $X$ of $S_n$ permutes the entries of the repetition vector~$\rv_k(X)$.

\begin{lemma}
\label{lem:right}
Let $X$ be a subset of $S_n$ and let $\sigma \in S_n$. Then the vector $\rv_k(X\sigma)$ is obtained by permuting the entries of the vector $\rv_k(X)$.
\end{lemma}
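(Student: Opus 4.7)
The plan is to show that right multiplication by $\sigma$ acts on a sequence $x\in S_n$ by relabelling its symbols, and that this relabelling permutes the set $S_{n,k}$ of ordered $k$-subsequences in a way that preserves the covering relation. The conclusion then follows by matching up entries of the two repetition vectors.

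First I would unpack the paper's convention. Writing $x = x_1 x_2 \cdots x_n$ for the permutation $i \mapsto x_i$, the composition $x\sigma$ (``$x$ followed by $\sigma$'') is the permutation $i \mapsto \sigma(x_i)$, so as a sequence $x\sigma = \sigma(x_1)\,\sigma(x_2)\,\cdots\,\sigma(x_n)$. In other words, $x\sigma$ is obtained from $x$ by relabelling each symbol $j$ as $\sigma(j)$. (The example $12345 \cdot 13254 = 13254$ and $43215 \cdot 13254 = 52314$ from \cref{ex:532} confirms this reading.)

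Next I would track where ordered $k$-subsequences go under this relabelling. For $y = (y_1,\ldots,y_k) \in S_{n,k}$, define
\[
\varphi_\sigma(y) := \bigl(\sigma^{-1}(y_1),\,\sigma^{-1}(y_2),\,\ldots,\,\sigma^{-1}(y_k)\bigr).
\]
Since $\sigma^{-1}$ is a bijection on $[n]$, the map $\varphi_\sigma$ is a permutation of $S_{n,k}$. The key observation is that, for any $x \in S_n$, the positions in $x\sigma$ occupied by the symbols $y_1,\ldots,y_k$ are precisely the positions in $x$ occupied by $\sigma^{-1}(y_1),\ldots,\sigma^{-1}(y_k)$. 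Consequently $x\sigma$ covers $y$ if and only if $x$ covers $\varphi_\sigma(y)$.

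Finally I would assemble the entry count. By \cref{def:rvnseq}, the $y$-th entry of $\rv_k(X\sigma)$ equals $|\{x \in X : x\sigma \text{ covers } y\}|$, which by the previous step equals $|\{x \in X : x \text{ covers } \varphi_\sigma(y)\}|$, i.e., the $\varphi_\sigma(y)$-th entry of $\rv_k(X)$. Since $\varphi_\sigma$ is a bijection of $S_{n,k}$, this exhibits $\rv_k(X\sigma)$ as a permutation of the entries of $\rv_k(X)$, as required. The only real obstacle is bookkeeping: making sure the convention for $x\sigma$ is interpreted consistently so that right multiplication acts on \emph{symbols} rather than positions; once this is fixed, the rest is a direct bijective argument.
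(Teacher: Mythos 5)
Your proposal is correct and follows the same route as the paper's (much terser) proof: the paper simply notes that right multiplication by $\sigma$ relabels symbols and hence permutes $S_{n,k}$, and you have made that permutation explicit as $\varphi_\sigma$ and verified the covering equivalence and the entry-matching. No gaps; your reading of the composition convention is confirmed by \cref{ex:532} exactly as you checked.
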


\begin{proof}
Under the convention that the permutation composition $\pi \sigma$ represents the action of $\pi$ followed by~$\sigma$,
the right action of $\sigma$ on the set $X$ permutes the symbols in~$[n]$ and so permutes the elements of~$S_{n,k}$. The result follows by \cref{def:rvnseq}.
\end{proof}
\noindent
It follows that the right action of a permutation $\sigma \in S_n$ maps one $\PSCA(n,k,\lambda)$ to another.
\begin{corollary}
\label{cor:relabel}
    Suppose $P$ is a $\PSCA(n,k,\lambda)$, and let $\sigma \in S_n$. Then $P\sigma$ is also a $\PSCA(n,k,\lambda)$.
\end{corollary}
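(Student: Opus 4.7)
The plan is to derive this immediately from the preceding Lemma. Since $P$ is a $\PSCA(n,k,\lambda)$, by Definition~\ref{def:rvnseq} every entry of the repetition vector $\rv_k(P)$ equals~$\lambda$, i.e.\ $\rv_k(P) = (\lambda, \lambda, \dots, \lambda)$.

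Next I would apply Lemma~\ref{lem:right} (which is stated for subsets but extends word-for-word to multisets, since $\rv_k$ is linear over the rows of the incidence matrix) to conclude that $\rv_k(P\sigma)$ is a coordinate permutation of $\rv_k(P)$. A constant vector is invariant under any permutation of its entries, so $\rv_k(P\sigma) = (\lambda, \lambda, \dots, \lambda)$ as well. Invoking Definition~\ref{def:rvnseq} again, this says exactly that every element of $S_{n,k}$ is covered by precisely $\lambda$ sequences of $P\sigma$, which is the PSCA condition.

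Finally, I would note that $P\sigma$ has the same cardinality as $P$ (right-multiplication by $\sigma$ is a bijection on $S_n$, and hence sends the multiset $P$ to a multiset of the same size), so $|P\sigma| = k!\lambda$ matches the necessary size condition automatically. There is no real obstacle: the only point worth flagging is verifying that Lemma~\ref{lem:right} applies to multisets as well as sets, which is transparent from its proof since $\rv_k$ is defined as a sum of rows of the incidence matrix indexed by the elements of the (multi)set.
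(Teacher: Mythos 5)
Your proof is correct and matches the paper's intended argument: the paper derives \cref{cor:relabel} directly from \cref{lem:right} by observing that the constant vector $\rv_k(P)=(\lambda,\dots,\lambda)$ is unchanged under any permutation of its entries. Your extra remark that the lemma extends from sets to multisets is a valid and worthwhile clarification, but does not change the route.
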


Subgroups $G$ and~$H$ of $S_n$ are \emph{conjugate} in~$S_n$ if $H=\sigma G\sigma^{-1}$ for some $\sigma \in S_n$. Conjugacy is an equivalence relation on the set of subgroups of $S_n$, and the equivalence class of $G$ under conjugation is the \emph{conjugacy class} $\Cl(G) := \{\sigma G\sigma^{-1}: \sigma \in S_n\}$.
Consider searching over all nontrivial subgroups $G$ of~$S_n$ and all sets $\Ra$ (of cardinality $\frac{k!\lambda}{|G|}$) of left coset representatives for~$G$ to find a perfect sequence covering array of the form $\bigcup_{\pi \in \Ra} \pi G$.
We now use \cref{cor:relabel} to show that it is sufficient to restrict attention to a single representative $G$ from each conjugacy class of subgroups of~$S_n$. This drastically reduces the required computation for an exhaustive search. For example, $S_7$ contains 11299 nontrivial subgroups, but only 95 nontrivial conjugacy classes of subgroups.

\begin{theorem}
\label{thm:leftconj}
Let $G$ be a subgroup of~$S_n$ and let $H \in \Cl(G)$. Suppose there is a $\PSCA(n,k,\lambda)$ that is a union of distinct left cosets of~$H$. Then there is a $\PSCA(n,k,\lambda)$ which is a union of distinct left cosets of~$G$.
\end{theorem}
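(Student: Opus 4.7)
The plan is to exploit \cref{cor:relabel}: right-multiplication by any $\sigma \in S_n$ maps one $\PSCA(n,k,\lambda)$ to another. The idea is to choose $\sigma$ so that the conjugation $H = \sigma G \sigma^{-1}$ converts left cosets of $H$ into left cosets of $G$ after right-multiplication by $\sigma$.

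First, since $H \in \Cl(G)$, pick $\sigma \in S_n$ with $H = \sigma G \sigma^{-1}$. Write the given $\PSCA(n,k,\lambda)$ as
\[
P = \bigcup_{\pi \in \Ra} \pi H,
\]
where $\Ra$ is a set of distinct left coset representatives for $H$ in $S_n$. By \cref{cor:relabel}, $P\sigma$ is also a $\PSCA(n,k,\lambda)$.

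The key computation is then
\[
P\sigma = \bigcup_{\pi \in \Ra} \pi H \sigma = \bigcup_{\pi \in \Ra} \pi (\sigma G \sigma^{-1}) \sigma = \bigcup_{\pi \in \Ra} (\pi\sigma) G,
\]
which expresses $P\sigma$ as a union of left cosets of $G$. To conclude, I need to check that these left cosets $(\pi\sigma)G$ are distinct as $\pi$ ranges over $\Ra$; this follows because the map $X \mapsto X\sigma$ on subsets of $S_n$ is a bijection, so distinct cosets $\pi H$ produce distinct images $(\pi H)\sigma = (\pi\sigma)G$.

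There is no real obstacle here: the argument is a short algebraic manipulation plus one invocation of \cref{cor:relabel}. The only subtle point worth emphasizing in the write-up is why left cosets behave well under right-multiplication by the conjugating element $\sigma$, namely the identity $\pi H \sigma = \pi\sigma G$ which uses $H\sigma = \sigma G$ (equivalent to $H = \sigma G \sigma^{-1}$). This is precisely what makes left cosets preferable to right cosets for the algorithmic reduction to one representative per conjugacy class.
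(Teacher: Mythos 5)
Your proposal is correct and follows essentially the same route as the paper: both write $H=\sigma G\sigma^{-1}$, apply \cref{cor:relabel} to $P\sigma$, and observe that $P\sigma=\bigcup_{\pi\in\Ra}(\pi\sigma)G$ is a union of distinct left cosets of $G$. Your explicit remark on distinctness (via bijectivity of right multiplication) is a small point the paper leaves implicit, but the argument is the same.
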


\begin{proof}
Let $P$ be a $\PSCA(n,k,\lambda)$ that can be written as
\begin{equation}
\label{eqn:union}
P = \underset{\pi \in \Ra}{\bigcup} \pi H
\end{equation}
for a set $\Ra$ of left coset representatives for~$H$.
Since $H \in \Cl(G)$, for some $\sigma \in S_n$ we can write

\begin{align}
P &= \underset{\pi \in \Ra}{\bigcup} \pi (\sigma G\sigma^{-1})
\notag \\
  &= \bigg( \underset{\mu \in \Ra \sigma}{\bigcup} \mu G \bigg) \sigma^{-1}.
  \notag
\end{align}

Then $P\sigma = \bigcup_{\mu \in \Ra \sigma} \mu G$ is a union of distinct left cosets of~$G$, and is a~$\PSCA(n,k,\lambda)$ by \cref{cor:relabel}.
\end{proof}

Consider instead searching for a perfect sequence covering array as a union $\bigcup_{\sigma \in \Ra} G \sigma$ of distinct right cosets of~$G$.
\cref{thm:leftconj} no longer holds when we replace left cosets by right cosets, because it relies on \cref{cor:relabel} which fails when the right action of $\sigma$ on $P$ is replaced by the left action.
Therefore an exhaustive search must consider all nontrivial subgroups of~$S_n$. 
However, there is now a worthwhile simplification in that we may assume the subgroup $G$ itself is one of the right cosets contained in the perfect sequence covering array $\bigcup_{\sigma \in \Ra} G\sigma$: let $\mu \in \Ra$, and note from \cref{cor:relabel} that $\Big(\bigcup_{\sigma \in \Ra} G\sigma \Big)\mu^{-1} = \bigcup_{\sigma \in \Ra} G (\sigma \mu^{-1})$ is a perfect sequence covering array, and that it contains the right coset~$G (\mu \mu^{-1}) = G$.

\subsection{Algorithm description}
\label{sec:alg-desc}
We can now describe an algorithm for finding all possible examples of a $\PSCA(n,k,\lambda)$ that is a union of $\frac{k!\lambda}{|G|}$ distinct cosets of a prescribed nontrivial subgroup $G$ of~$S_n$. The algorithm attempts to build a $\PSCA(n,k,\lambda)$ one coset of $G$ at a time without covering any $k$-subsequence more than $\lambda$ times. If it terminates without output then there is no $\PSCA(n,k,\lambda)$ that is a union of distinct cosets of~$G$.

The algorithm follows the same principles as \cref{alg:genPSCA}, but operates on the following compressed version of the $(n,k)$-incidence matrix in which the $|G|$ repetition vectors indexed by the sequences of a coset are replaced by their sum, because the entire coset is either contained or not contained in the perfect sequence covering array.
The advantage of this approach is that prescribing the subgroup~$G$ reduces the maximum search depth by a factor of~$|G|$. This gives a dramatic speed improvement, even for~$|G| = 2$. 

\begin{definition}
\label{def:IMgroup}
Let $G$ be a subgroup of $S_n$, and let $\Ra$ be a complete set of left (or right) coset representatives for $G$ in~$S_n$.
The \emph{left (or right) $(n,k)$-incidence matrix for~$G$} is the $\frac{n!}{|G|} \times \frac{n!}{(n-k)!}$ array over $\{0,1,\dots,|G|\}$ whose rows are indexed by $\Ra$, whose columns are indexed by the elements of $S_{n,k}$, and whose $(x,y)$ entry is the number of times the coset $xG$ (or $Gx$) covers $y \in S_{n,k}$.
\end{definition}

Pseudocode implementing the search procedure is given in \cref{alg:subgroup} (which reduces to \cref{alg:genPSCA} if we take $G$ to be the trivial subgroup).
\cref{alg:genPSCA} keeps track of a shrinking set $L$ of rows and a shrinking set $Y$ of columns of the $(n,k)$-incidence matrix; \cref{alg:subgroup} does the same, but in relation to the $(n,k)$-incidence matrix for~$G$. 
This requires the following modifications to \cref{alg:genPSCA}, because the entries $a_{x,y}$ of the latter matrix are no longer restricted to~$\{0,1\}$:

\begin{description}
\item Line 9.
The target size of $|X|$ is the number $\frac{k! \lambda}{|G|}$ of cosets, rather than the number $k!\lambda$ of $n$-sequences.

\item Line 18.
The entries of the $(n,k)$-incidence matrix for $G$ lie in $\{0,1,\dots,|G|\}$ rather than~$\{0,1\}$, so the test $a_{x,y^*} = 1$ is replaced by $a_{x,y^*} \ne 0$. 

\item Line 6.
The set $L$ of candidates for enlarging $X$ is more constrained, because inclusion of a row containing an entry $a_{\ell,y} > 1$ could cause the sum $a_{\ell,y} + r_y$ to exceed~$\lambda$. We therefore impose that $a_{\ell,y} + r_y \le \lambda$ over all values of $y$ (not just those lying outside~$Y$). We can rewrite $L$ as specified in Line 6 as 
\[
    \{\ell \not \in X: a_{\ell,y} = 0 \mbox{ for all } y \not \in Y\} \cap \{\ell \not \in X: a_{\ell,y} + r_y \le \lambda \mbox{ for all } y \in Y \},
\]
to see that the set $L$ in \cref{alg:subgroup} is a subset of the set $L$ in \cref{alg:genPSCA}.

\item Line 23.
The set $B$ used to update $L$ at Line~24 follows from the definition of $L$ at Line 6 and the updated value of $R$ at Line~22. 
\end{description}

\begin{algorithm}
\caption{Tree search for union-of-cosets $\PSCA(n,k,\lambda)$ by backtracking}
\label{alg:subgroup}
\begin{algorithmic}[1]
\Require PSCA parameters $(n, k, \lambda)$ and $A = \big(a_{x,y} : x \in \Ra,\, y \in S_{n,k} \big)$ = left or right $(n,k)$-incidence matrix for a subgroup $G$ of~$S_n$ and complete set of coset representatives $\Ra$, where $|G|$ divides~$k! \lambda$. 
\Procedure{SEARCH}{$X, Y, R, L, M$}
    \State \Comment{\%~$X$ = coset representatives of partial $\PSCA(n,k,\lambda)$ $P$}
    \State \Comment{\%~write $(r_y) = \rv_k(X) = (\sum_{x \in X}~a_{x,y})$ = sum of rows of $A$ indexed by~$X$}
    \State \Comment{\%~$Y = \{y \in S_{n,k}: r_y < \lambda\}$ = $k$-subsequences not yet covered $\lambda$ times by $P$}
    \State \Comment{\%~$R = (r_y : y \in Y)$ = entries of $\rv_k(X)$ indexed by~$Y$}
    \State \Comment{\%~$L = \{\ell \not \in X: a_{\ell,y} + r_y \le \lambda$ for all $y\}$ = candidates for enlarging $X$} 
    \State \Comment{\%~write $(m_y) = \rv_k(L) = (\sum_{\ell \in L} a_{\ell,y})$ = sum of rows of $A$ indexed by~$L$}
    \State \Comment{\%~$M = (m_y: y \in Y)$ = entries of $\rv_k(L)$ indexed by~$Y$}

    \If {$|X| =\frac{k!\lambda}{|G|}$}
        \State record $X$ as a set of coset representatives for a $\PSCA(n,k,\lambda)$.
        \State \Return
    \EndIf
                    
    \If{$r_y + m_y < \lambda$ for some $y \in Y$}
        \State \Return  \Comment{\quad \% terminate branch early}
    \EndIf

    \State let $Y'$ be the set of $y \in Y$ for which $r_y$ attains its maximum value.
    \State choose an arbitrary $y = y^* \in Y'$ for which $m_y$ attains its minimum value.
    \State $L(y^*) \coloneqq \{ x \in L : a_{x,y^*} \ne 0\}$
    \For {\textbf{each} $x \in L(y^*)$}
        \State $X_{\text{new}} \coloneqq X \cup \{x\}$
        \State $Y_{\text{new}} \coloneqq Y \setminus \{y : r_y + a_{x,y} = \lambda\}$
        \State $R_{\text{new}} \coloneqq (r_y + a_{x,y} : y \in Y_\text{new})$
        \State $B \coloneqq \{x\} \cup \{\ell \in L:$ $a_{\ell,y} + r_y + a_{x,y} > \lambda$ for some $y \in Y\}$ 
        \State $L_{\text{new}} \coloneqq L \setminus B$
        \State $M_{\text{new}} \coloneqq (m_y - \sum_{\ell \in B}a_{\ell,y} : y \in Y_\text{new})$
        \State SEARCH$(X_{\text{new}}, Y_{\text{new}}, R_{\text{new}}, L_{\text{new}}, M_{\text{new}})$
    \EndFor
    \EndProcedure
\end{algorithmic}
\end{algorithm}

The procedure SEARCH of \cref{alg:subgroup} searches for a $\PSCA(n,k,\lambda)$ comprising a union of $\frac{k!\lambda}{|G|}$ distinct cosets of a single prescribed nontrivial subgroup $G$ of~$S_n$, where $|G|$ divides~$k! \lambda$. 
In the case of left cosets, it is sufficient to search over a single representative $G$ of each conjugacy class of nontrivial subgroups of~$S_n$ (see \cref{sec:LR}). For each such subgroup $G$, we may exclude from the initial candidate set $L$ each coset representative that indexes a row of the left $(n,k)$-incidence matrix for $G$ containing some entry greater than~$\lambda$, and initialize $M$ accordingly. 
To determine whether a $\PSCA(n,k,\lambda)$ exists using this procedure, it is most efficient to examine the set of suitable subgroups $G$ for \cref{alg:subgroup} in decreasing order of~$|G|$, because a larger value of $|G|$ gives a more dramatic speed improvement over \cref{alg:genPSCA}. We do not take $G$ to the trivial group, because then \cref{alg:subgroup} reduces to \cref{alg:genPSCA} and, even if a perfect sequence covering array is found, no additional structure is identified.

In the case of right cosets, we must instead search over all subgroups $G$ of $S_n$, although we may take the initial set $X$ to be $\{1_G\}$ and initialize $Y$, $R$, $L$, $M$ accordingly (see \cref{sec:LR}). We need examine the subgroup $G$ only if the row indexed by $1_G$ of the right $(n,k)$-incidence matrix for $G$ has all entries at most $\lambda$; if so, then the same holds for all other rows by \cref{lem:right}.

\cref{tab:timing} illustrates the differences in CPU time required to search for all possible examples of a $\PSCA(n,k,\lambda)$ without repeated elements using
\cref{alg:genPSCA}, and using distinct left or right cosets with \cref{alg:subgroup} over all subgroups $G$ of a specified order.
These search times refer to a C implementation on a single core of an Intel Xeon E5-2680, excluding the precalculation time for incidence matrices in GAP \cite{GAP4} for \cref{alg:subgroup} (which carries negligible overhead for larger searches).
Comparison of timings for $(n,k,\lambda) = (6,3,2)$ shows that \cref{alg:subgroup}, when it succeeds, is significantly faster than \cref{alg:genPSCA} even when $|G| = 2$. 
Comparison of timings for $(n,k,\lambda) = (7,3,2)$ shows that \cref{alg:subgroup}, when it succeeds for a larger $|G|$, is dramatically faster than with a smaller~$|G|$.
Comparison of timings for $(n,k,\lambda) = (7,4,2)$ shows that \cref{alg:subgroup} using left cosets can succeed when \cref{alg:subgroup} using right cosets fails for the same $|G|$,
and in that case a successful search using left cosets is significantly faster than using right cosets.
The comparison times for the parameter sets $(7,5,1)$ and $(8,6,1)$ taken from \cite{MT99} refer to exhaustive searches carried out in 1999 on an Ultra SPARCstation 5 to establish \cref{prop:g74g75g86}.

\begin{table}[htbp]
\caption{CPU time to search for all possible examples of a $\PSCA(n,k,\lambda)$}
\label{tab:timing}
\centering
\begin{tabular}{l|ccccl|l}
    \toprule
    method		& cosets 	& $(n,k,\lambda)$   & $|G|$ 	& examples? 	& CPU time	& 1999 search \cite{MT99} \\ \hline
    \cref{alg:genPSCA}	&		& $(7,5,1)$         &      	& no		& 0.1 seconds 	& 5 minutes \\
    \cref{alg:genPSCA}	&		& $(8,6,1)$         &      	& no		& 100 minutes	& 100 hours \\
    \cref{alg:genPSCA}	&		& $(6,3,2)$         &      	& yes		& 3 seconds	& \\
    \cref{alg:genPSCA}	&		& $(7,3,2)$         &      	& yes		& 40 minutes	& \\ \hline
    \cref{alg:subgroup} & left		& $(6,3,2)$         & 2     	& yes		& 0.4 seconds	& \\
    \cref{alg:subgroup} & left 		& $(7,3,2)$         & 2    	& yes		& 50 seconds 	& \\
    \cref{alg:subgroup} & left       	& $(7,3,2)$         & 6    	& yes		& 0.05 seconds 	& \\
    \cref{alg:subgroup} & left        	& $(7,4,2)$         & 6    	& yes		& 3 seconds	& \\ \hline
    \cref{alg:subgroup} & right        	& $(7,4,2)$         & 6    	& no		& 10 seconds	& \\
    \cref{alg:subgroup} & right        	& $(7,4,2)$         & 2    	& yes		& 210 minutes 	& \\
    \bottomrule
\end{tabular}
\end{table}

\subsection{New values and bounds for $g(n,k)$}
\label{sec:newg}
\cref{alg:subgroup} finds the following examples of a $\PSCA(7,4,2)$, a $\PSCA(7,5,4)$, a $\PSCA(8,3,3)$, and a $\PSCA(9,3,4)$ as a union of left cosets. 
By reference to \cref{tab:gvalues}, this gives the new results $g(7,4)=2$ and $g(7,5) \in \{2,3,4\}$ and $g(8,3) \in \{2,3\}$ and $g(9,3) \in \{2,3,4\}$.

\begin{proposition}
\label{prop:newg-subgroup}
\mbox{}
\begin{enumerate}[$(i)$]
\item
%

The following $48$ sequences form a $\PSCA(7,4,2)$ as a union of $8$ left cosets of the subgroup 
$G = \langle 4735621 \rangle \cong C_6$ of~$S_7$:
\[
\begin{array}{c||*{6}{c}}
\text{{\rm left coset}}   & \multicolumn{6}{c}{{\rm sequences}}		  \\ \hline
 1254736\,G &   1254736 & 2517634 & 4765132 & 5126437 & 6472531 & 7641235 \\
 1347256\,G &   1347256 & 2376514 & 4351762 & 5364127 & 6325471 & 7312645 \\
 1352746\,G &   1352746 & 2315674 & 4367152 & 5321467 & 6374521 & 7346215 \\
 1362745\,G &   1362745 & 2345671 & 4327156 & 5371462 & 6314527 & 7356214 \\
 1365472\,G &   1365472 & 2341765 & 4326517 & 5372641 & 6317254 & 7354126 \\
 1462537\,G &   1462537 & 2745136 & 4527631 & 5671234 & 6214735 & 7156432 \\
 1745236\,G &   1745236 & 2671534 & 4156732 & 5462137 & 6527431 & 7214635 \\
 1765234\,G &   1765234 & 2641537 & 4126735 & 5472136 & 6517432 & 7254631 
\end{array}
\]
Therefore $g(7,4)= 2$.

\item

The following $480$ sequences (listed only by reference to cosets) form a $\PSCA(7,5,4)$ as $20$ left cosets of the subgroup 
$G = \langle 7261354,\, 4216537 \rangle \cong S_4$ of~$S_7$:
\[
\begin{array}{*{5}{c}}
1234567\,G & 1236574\,G & 1324576\,G & 1324675\,G & 1325476\,G \\
1325674\,G & 1347265\,G & 1354267\,G & 1357642\,G & 1364275\,G \\
1364572\,G & 1367542\,G & 1374256\,G & 1375624\,G & 1623574\,G \\
1635247\,G & 1637425\,G & 2137465\,G & 2137654\,G & 2163457\,G 
\end{array}
\]

Therefore $g(7,5) \in \{2,3,4\}$.

\item
%

The following $18$ sequences form a $\PSCA(8,3,3)$ as a union of $9$ left cosets of the subgroup 
$G = \langle 85672341 \rangle \cong C_2$ of~$S_8$:
\[
\begin{array}{c||*{2}{c}}
\text{{\rm left coset}}   & \multicolumn{2}{c}{{\rm sequences}}	\\ \hline
 12345678\,G &   12345678 & 85672341 \\
 15468237\,G &   15468237 & 82731564 \\
 17624385\,G &   17624385 & 84357612 \\
 27561843\,G &   27561843 & 54238176 \\
 28461573\,G &   28461573 & 51738246 \\
 31864275\,G &   31864275 & 68137542 \\
 32654187\,G &   32654187 & 65327814 \\
 37461528\,G &   37461528 & 64738251 \\
 47218653\,G &   47218653 & 74581326 
\end{array}
\]
Therefore $g(8,3) \in \{2,3\}$.

\item
%

The following $24$ sequences form a $\PSCA(9,3,4)$ as a union of $4$ left cosets of the subgroup 
$G = \langle 768241593 \rangle \cong C_6$ of~$S_9$:
\[
\begin{array}{c||*{6}{c}}
\text{{\rm left coset}}   & \multicolumn{6}{c}{{\rm sequences}} 			\\ \hline
 123456897\,G &   123456897 & 258714936 & 473165892 & 519627384 & 649572381 & 768241935 \\
 154372968\,G &   154372968 & 217865349 & 461327958 & 526941873 & 675914823 & 742856319 \\
 198426537\,G &   198426537 & 239754186 & 498175632 & 583617294 & 683542791 & 739261485 \\
 318697425\,G &   318697425 & 348592176 & 829436751 & 879135264 & 953784612 & 963281547 
\end{array}
\]
Therefore $g(9,3) \in \{2,3,4\}$.
\end{enumerate}
\end{proposition}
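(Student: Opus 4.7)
The proof of each of the four parts has the same basic structure: exhibit an explicit perfect sequence covering array with the stated parameters, and then combine with the lower bounds recorded in \cref{tab:gvalues} to pin down the allowed range for $g(n,k)$. So my plan is, for each part $(i)$--$(iv)$ in turn, to first verify that the listed permutation $\sigma$ (or pair of permutations) generates a subgroup $G$ of $S_n$ of the claimed isomorphism type and order, by listing the elements of $G$ and checking the defining relations (trivial for $C_2$ and $C_6$; for $S_4$ in part $(ii)$ one identifies the two generators with standard generators of a presentation of $S_4$ and verifies that they satisfy the requisite relations, then counts the generated group to confirm it has order~$24$).

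Next I would verify, for each part, that the displayed set $X$ of sequences decomposes correctly as the stated union of left cosets of~$G$. This reduces to: (a) checking the given coset representatives lie in distinct left cosets of~$G$ (equivalently, that the pairwise quotients do not lie in~$G$), and (b) checking that each displayed row consists of $|G|$ distinct sequences forming a single left coset $\pi G$, for which one simply multiplies the coset representative on the right by each element of~$G$ and compares with the listed sequences. For the larger examples, particularly $(ii)$ and $(iii)$, we cite the computer algebra package GAP as the vehicle for the check, since the totals $|X| = k!\lambda$ match (e.g.\ $48 = 4!\cdot 2$, $480 = 5!\cdot 4$, $18 = 3!\cdot 3$, $24 = 3!\cdot 4$) and the number of cosets equals $k!\lambda/|G|$ in each case.

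The central step is the verification of the PSCA property: that each of the $\frac{n!}{(n-k)!}$ ordered $k$-subsequences is covered exactly $\lambda$ times. Equivalently, by \cref{def:rvnseq} we must show that every entry of $\rv_k(X)$ equals $\lambda$. For each part, I would compute $\rv_k(X)$ directly by iterating over all $|X|\binom{n}{k}$ covered $k$-subsequences and tallying multiplicities (a finite, mechanical check); the largest instance has $|X|\binom{n}{k} = 480\cdot\binom{7}{5} = 10080$ incidences compared against $|S_{7,5}|=2520$ targets, well within routine computer verification. Since the sequences were produced by \cref{alg:subgroup}, which by construction only outputs tuples satisfying the PSCA condition, this is essentially a sanity check, but must still be recorded.

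Finally, I would derive the asserted bounds on $g(n,k)$. For $(i)$, the existence of a $\PSCA(7,4,2)$ gives $g(7,4)\le 2$, which combined with the lower bound $g(7,4)\ge 2$ from \cref{thm:klein} (also entry (s5,s6) of \cref{tab:gvalues}) yields $g(7,4)=2$. For $(ii)$--$(iv)$, existence of the displayed array gives the upper bound on $g(n,k)$, while the corresponding lower bound $g(n,k)\ge 2$ is read off \cref{tab:gvalues} (entries (s5) or (s7), ultimately via \cref{lem:modify}~$(i)$ from \cref{prop:g74g75g86}). The main obstacle is essentially bookkeeping: making sure that the coset decomposition is consistent and that the repetition vector is computed correctly; there is no conceptual difficulty, only the need for careful, preferably computer-assisted, verification of the four explicit examples.
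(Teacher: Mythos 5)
Your proposal is correct and follows essentially the same route as the paper: the paper's ``proof'' consists precisely of exhibiting the computer-found examples from \cref{alg:subgroup}, implicitly relying on mechanical verification of the coset structure and of the repetition vector $\rv_k(X)$, and then reading the matching lower bounds off \cref{tab:gvalues} (via \cref{prop:g74g75g86}, \cref{thm:klein}, and \cref{lem:modify}~$(i)$) to conclude $g(7,4)=2$, $g(7,5)\in\{2,3,4\}$, $g(8,3)\in\{2,3\}$, and $g(9,3)\in\{2,3,4\}$. You have merely made explicit the bookkeeping that the paper leaves implicit.
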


\cref{tab:gvalues-updated} is an updated table of values of $g(n,k)$ for small $n$ and $k$, taking account the new values and bounds from \cref{prop:newg-gen} and \cref{prop:newg-subgroup}.

\begin{table}[htbp]
\centering
\caption{Updated table of $g(n,k)$ for small $n$ and $k$, with new values or bounds in bold. 
(s1): \cref{lem:trivial}.
(s2): \cref{thm:lev}.
(s3): \cref{ex:532}.
(s4): \cref{prop:641}.
(s5): \cref{prop:g74g75g86}.
(s6): \cref{thm:klein}.
(s7): \cref{lem:modify}~$(i)$.
(s8): \cref{prop:newg-gen}.
(s9): \cref{prop:newg-subgroup}.
}
\label{tab:gvalues-updated}
\vspace{1em} 
\begin{tabular}{c|*{5}{c}C{1.7cm}}
\toprule
\diagbox{$n$}{$k$} 
	& 2		& 3				& 4		& 5				& 6		& 7		\\ \hline \hline
2	& 1 (s1)	& \multicolumn{5}{l}{} \\ \cline{1-2}
3	& 1 (s1)	& 1 (s1)			& \multicolumn{4}{l}{} \\ \cline{1-3}
4	& 1 (s1)	& 1 (s2)			& 1 (s1)	& \multicolumn{3}{l}{} \\ \cline{1-4}
5	& 1 (s1)	& 2 (s3)			& 1 (s2)	& 1 (s1)			& \multicolumn{2}{l}{} \\ \cline{1-5}
6	& 1 (s1)	& {\bf 2} (s8)			& 1 (s4)	& 1 (s2)			& 1 (s1)	& 		\\ \cline{1-6}
7	& 1 (s1)	& {\bf 2} (s8)			& {\bf 2} (s9)	& {\bf 2} or {\bf 3} or {\bf 4} (s9)& 1 (s2)	& 1 (s1)	\\ \hline
8	& 1 (s1)	& {\bf 2} or {\bf 3} (s9)	& $\ge 2$ (s7)	& $\ge 2$ (s7)			& $\ge 2$ (s5)	& 1 (s2)	\\ \hline
9	& 1 (s1)	& {\bf 2} or {\bf 3} or {\bf 4} (s9)& $\ge 2$ (s7)	& $\ge 2$ (s7)			& $\ge 2$ (s7)	& ?     	\\ \hline
\bottomrule
\end{tabular}
\end{table}

A search for a $\PSCA(8,3,2)$ using left cosets in \cref{alg:subgroup} completed without output: 
if a $\PSCA(8,3,2)$ exists, then it does not occur as a union of distinct left cosets of a nontrivial subgroup of~$S_8$.
A search for a $\PSCA(8,4,2)$ and a $\PSCA(9,3,2)$ using left cosets in \cref{alg:subgroup} completed without output for a representative of each conjugacy class of subgroups of order greater than~$2$: if a $\PSCA(8,4,2)$ or $\PSCA(9,3,2)$ occurs as a union of distinct left cosets of a nontrivial subgroup of $S_8$ or~$S_9$, respectively, then the subgroup has order~$2$.

Although we obtained some positive results using \cref{alg:subgroup} for right cosets, the structure uncovered was less rich than for left cosets.
In particular, in each case tested we found that if a $\PSCA(n,k,\lambda)$ occurs as a union of distinct right cosets of a nontrivial subgroup $G$ of~$S_n$, then a $\PSCA(n,k,\lambda)$ also occurs as a union of distinct left cosets of a subgroup $G'$ of $S_n$, where $G'$ is isomorphic to~$G$. 
There were also several instances when the largest $|G|$ was smaller than the largest~$|G'|$, in which case the search using right cosets was signficantly slower (see \cref{sec:alg-desc}).
We therefore did not attempt to carry out some of the larger searches for right cosets.

\subsection{New parameter sets $(n,k,\lambda)$ for a $\PSCA(n,k,\lambda)$}
\label{sec:newparams}

Charlie Colbourn (personal communication, Sept.\ 2021) posed the following question. 
\begin{question}
\label{question:charlie}
Does the existence of a $\PSCA(n,k,\lambda)$ imply the existence of a $\PSCA(n,k,\lambda+1)$?
\end{question}
\noindent
This prompts the following observation as a direct consequence of \cref{lem:union}.  
\begin{lemma}
\label{lem:gto2g-1}
Let $k \le n$ and let $g = g(n,k)$.
Suppose there exists a $\PSCA(n,k,\lambda)$ for each 
\[
\lambda \in \{g, g+1, \dots, 2g-1\}.
\]
Then there exists a $\PSCA(n,k,\lambda)$ if and only if $\lambda \ge g$,
and the answer to Question~$\ref{question:charlie}$ is yes for the parameter pair $(n,k)$.
\end{lemma}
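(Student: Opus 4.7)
The plan is to reduce everything to the composition lemma (\cref{lem:union}) combined with a Euclidean division argument. The ``only if'' direction is immediate from the definition of $g = g(n,k)$: by definition, no $\PSCA(n,k,\lambda)$ exists for $\lambda < g$.

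For the ``if'' direction, I would proceed as follows. Let $\lambda \ge g$. Write $\lambda - g = qg + s$ with $q \ge 0$ and $0 \le s \le g-1$ (Euclidean division), so that
\[
    \lambda = qg + \mu, \quad \text{where } \mu := g + s \in \{g, g+1, \dots, 2g-1\}.
\]
By hypothesis a $\PSCA(n,k,\mu)$ exists, and by definition of $g$ a $\PSCA(n,k,g)$ exists. Applying \cref{lem:union} inductively to take the multiset union of one copy of the former with $q$ copies of the latter produces a $\PSCA(n,k,qg + \mu) = \PSCA(n,k,\lambda)$. Combined with the ``only if'' direction, this gives the biconditional.

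For the consequence regarding \cref{question:charlie}, note that the biconditional just established says that the set of $\lambda$ for which a $\PSCA(n,k,\lambda)$ exists is exactly $\{g, g+1, g+2, \dots\}$. Thus for every $\lambda \ge g$, existence at $\lambda$ entails existence at $\lambda+1$; and for every $\lambda < g$, existence at $\lambda$ fails, so the implication is vacuously true. Hence the answer to \cref{question:charlie} is yes for the pair $(n,k)$.

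There is no real obstacle here: the argument is essentially a one-line application of \cref{lem:union} with a division step. The only point requiring a moment of care is ensuring the division is set up so that the quotient $\mu$ lands in the prescribed window $\{g, \dots, 2g-1\}$ rather than in $\{0, \dots, g-1\}$; this is why one divides $\lambda - g$ (rather than $\lambda$) by $g$ and adds $g$ back to the remainder.
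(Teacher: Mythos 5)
Your proof is correct and is precisely the argument the paper has in mind: the authors present \cref{lem:gto2g-1} as a direct consequence of \cref{lem:union}, and your Euclidean-division decomposition $\lambda = qg + \mu$ with $\mu \in \{g,\dots,2g-1\}$, followed by repeated multiset union with copies of a $\PSCA(n,k,g)$, is the standard way to fill in that observation. No issues.
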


\cref{alg:subgroup} finds the following examples of a $\PSCA(5,3,3)$, a $\PSCA(6,3,3)$, a $\PSCA(7,3,3)$, a $\PSCA(7,4,3)$, a $\PSCA(8,3,4)$, and a $\PSCA(8,3,5)$ as a union of left cosets. Each of these parameter sets is new. 
Combination of these examples with the results in \cref{tab:gvalues-updated} shows that the answer to \cref{question:charlie} is yes for each of the parameter pairs
\[
(n,k) \in \{ (5,3), (6,3), (7,3), (7,4), (8,3) \},
\]
regardless of whether $g(8,3)=2$ or $g(8,3)=3$, and application of \cref{lem:gto2g-1} gives \cref{cor:alllambda}.
We do not currently know of a parameter set $(n,k,\lambda)$ for which the answer to \cref{question:charlie} is no.

\begin{proposition}
\label{prop:x33}
\mbox{}
\begin{enumerate}[$(i)$]

\item
%

The following $18$ sequences form a $\PSCA(5,3,3)$ as a union of $9$ left cosets of the subgroup 
$G = \langle 43215 \rangle \cong C_2$ of~$S_5$:
\[
\begin{array}{c||*{2}{c}}
\text{{\rm left coset}}   & \multicolumn{2}{c}{{\rm sequences}}		  \\ \hline
 12345\,G &   12345 & 43215 \\
 12435\,G &   12435 & 43125 \\
 13452\,G &   13452 & 42153 \\
 15324\,G &   15324 & 45231 \\
 21543\,G &   21543 & 34512 \\
 23514\,G &   23514 & 32541 \\
 24513\,G &   24513 & 31542 \\
 51423\,G &   51423 & 54132 \\
 52314\,G &   52314 & 53241 
\end{array}
\]

\item
%

The following $18$ sequences form a $\PSCA(6,3,3)$ as a union of $3$ left cosets of the subgroup 
$G = \langle 634215, 456123 \rangle \cong S_3$ of~$S_6$:
\[
\begin{array}{c||*{6}{c}}
\text{{\rm left coset}}   & \multicolumn{6}{c}{{\rm sequences}}		  \\ \hline
 123456\,G &   123456 & 215634 & 361542 & 456123 & 542361 & 634215 \\
 134265\,G &   134265 & 256143 & 315624 & 461532 & 523416 & 642351 \\
 162435\,G &   162435 & 241653 & 326514 & 435162 & 514326 & 653241 
\end{array}
\]

\item
%

The following $18$ sequences form a $\PSCA(7,3,3)$ as a union of $9$ left cosets of the subgroup 
$G = \langle 3412765 \rangle \cong C_2$ of~$S_7$:
\[
\begin{array}{c||*{2}{c}}
\text{{\rm left coset}}   & \multicolumn{2}{c}{{\rm sequences}}		  \\ \hline
 1234567\,G &   1234567 & 3412765 \\
 1253764\,G &   1253764 & 3471562 \\
 1643572\,G &   1643572 & 3621754 \\
 5147362\,G &   5147362 & 7325164 \\
 5241673\,G &   5241673 & 7423651 \\
 5276341\,G &   5276341 & 7456123 \\
 5432617\,G &   5432617 & 7214635 \\
 6175324\,G &   6175324 & 6357142 \\
 6245371\,G &   6245371 & 6427153 
\end{array}
\]

\item
%

The following $72$ sequences form a $\PSCA(7,4,3)$ as a union of $18$ left cosets of the subgroup 
$G = \langle 1576342 \rangle \cong C_4$ of~$S_7$:
\[
\begin{array}{c||*{4}{c}}
\text{{\rm left coset}}   & \multicolumn{4}{c}{{\rm sequences}}		  \\ \hline
 1234567\,G &   1234567 & 1324765 & 1576342 & 1756243 \\
 1256437\,G &   1256437 & 1376425 & 1534672 & 1724653 \\
 2136754\,G &   2136754 & 3126574 & 5174236 & 7154326 \\
 2164573\,G &   2164573 & 3164752 & 5146327 & 7146235 \\
 2315476\,G &   2315476 & 3217456 & 5713624 & 7512634 \\
 2436751\,G &   2436751 & 3426571 & 5674231 & 7654321 \\
 2537614\,G &   2537614 & 3725614 & 5372416 & 7253416 \\
 2574613\,G &   2574613 & 3754612 & 5326417 & 7236415 \\
 2716354\,G &   2716354 & 3516274 & 5214736 & 7314526 \\
 2741536\,G &   2741536 & 3541726 & 5261374 & 7361254 \\
 2764351\,G &   2764351 & 3564271 & 5246731 & 7346521 \\
 4127356\,G &   4127356 & 4135276 & 6152734 & 6173524 \\
 4162753\,G &   4162753 & 4163572 & 6145237 & 6147325 \\
 4251673\,G &   4251673 & 4371652 & 6531427 & 6721435 \\
 4263175\,G &   4263175 & 4362157 & 6547123 & 6745132 \\
 4523716\,G &   4523716 & 4732516 & 6275314 & 6357214 \\
 4536172\,G &   4536172 & 4726153 & 6254137 & 6374125 \\
 4576312\,G &   4576312 & 4756213 & 6234517 & 6324715 
\end{array}
\]

\item
%

The following $24$ sequences form a $\PSCA(8,3,4)$ as the single left coset $12354678\,G$ of the subgroup 
$G = \langle 67142358,\, 46572381 \rangle \cong {\rm SL}(2,3)$ of~$S_8$:
\[
\begin{array}{*{8}{c}}
12354678 & 16543872 & 18435276 & 24851637 & 26518734 & 27185436 & 31746825 & 35674128 \\
38467521 & 41752683 & 43275186 & 46527381 & 51738264 & 52387461 & 54873162 & 63241857 \\
67124358 & 68412753 & 73268514 & 74826315 & 75682413 & 82316745 & 85631247 & 87163542 
\end{array}
\]

\item
%

The following $30$ sequences form a $\PSCA(8,3,5)$ as a union of $5$ left cosets of the subgroup 
$G = \langle 65872143,\, 45712836 \rangle \cong S_3$ of~$S_8$:
\[
\begin{array}{c||*{6}{c}}
\text{{\rm left coset}}   & \multicolumn{6}{c}{{\rm sequences}}		  \\ \hline
 12345678\,G &   12345678 & 35182764 & 45712836 & 65872143 & 72465381 & 82635417 \\
 12485736\,G &   12485736 & 35842617 & 45162378 & 65732481 & 72615843 & 82375164 \\
 17384625\,G &   17384625 & 36148752 & 43761852 & 64837152 & 78416325 & 81673425 \\
 17564823\,G &   17564823 & 36278451 & 43281657 & 64217358 & 78536124 & 81543726 \\
 21685347\,G &   21685347 & 27315468 & 28475631 & 53742186 & 54862713 & 56132874 
\end{array}
\]

\end{enumerate}
\end{proposition}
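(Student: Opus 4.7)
The plan is to verify each of the six parts (i)--(vi) as a direct check. These examples were produced by \cref{alg:subgroup}, so the task is to confirm that the output is what it claims to be. For each part I would split the verification into three independent subtasks: (a)~check that the given generators generate a subgroup $G \le S_n$ of the stated order and isomorphism type; (b)~check that the sequences listed in each row block $xG$ are exactly the set $\{xg : g \in G\}$, and that the listed coset representatives lie in pairwise distinct cosets of~$G$; and (c)~check the PSCA property itself.

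For (a), I would expand each generating set by repeated composition until closure, count the resulting elements, and identify the isomorphism type. The cyclic cases $C_2$, $C_4$, $C_6$ can be read off from the order of a single generator. For $S_3$ and $S_4$ in parts (ii) and (vi) (and the subgroup $S_4$ already used implicitly by \cref{prop:newg-subgroup}), I would match a standard Coxeter presentation: two generators of orders $2$ and $3$ with the appropriate braid relation. For $\mathrm{SL}(2,3)$ in case~(v) I would distinguish it from the other four groups of order $24$ by observing that it has a unique element of order~$2$, namely its center, which is easy to verify once the full element list is in hand. Task (b) is then an elementary orbit computation: apply every $g \in G$ to each listed coset representative and match entries against the table.

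Task (c) is the substantive step. I would use \cref{def:IMgroup}: the coset $xG$ contributes exactly $a_{x,y}$ to the entry of $\rv_k$ indexed by $y \in S_{n,k}$, where $a_{x,y}$ is the corresponding entry of the left $(n,k)$-incidence matrix for~$G$. Summing $a_{x,y}$ over the listed coset representatives and checking that the total equals $\lambda$ at every one of the $\frac{n!}{(n-k)!}$ positions concludes the proof. Equivalently, one can flatten each coset block into its $|G|$ constituent sequences, treat the whole multiset as a subset of $S_n$, and compute $\rv_k$ directly from \cref{def:rvnseq}, confirming that each entry is exactly~$\lambda$.

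The main obstacle is bookkeeping rather than mathematical depth: case (iv) already involves $72$ sequences and a length-$840$ repetition vector, and case (v) requires a single coset of order $24$ to cover each of the $336$ elements of $S_{8,3}$ with multiplicity exactly~$4$. Done by hand this is forbidding, but it reduces to a short script, or equivalently to the same GAP routine that precomputes the incidence matrices driving \cref{alg:subgroup}. Note that \cref{thm:leftconj} plays no role in the verification: the subgroup is already fixed by the statement, so there is no need to reduce over conjugacy classes.
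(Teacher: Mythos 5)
Your proposal is correct and coincides with the paper's own (implicit) justification: the proposition is presented as bare computational data produced by \cref{alg:subgroup}, and its proof is exactly the finite verification you describe --- confirm the subgroup closure and order, confirm that the listed rows are full and pairwise distinct left cosets, and confirm that every entry of $\rv_k$ of the flattened multiset equals $\lambda$. One small quibble in part (v): a unique involution does not by itself single out ${\rm SL}(2,3)$ among the groups of order $24$ (the dicyclic group of order $24$, $C_3 \times Q_8$, and $C_{24}$ also have exactly one element of order $2$), so you should supplement that test by, say, counting the eight elements of order $3$ and six of order $4$; this affects only the descriptive isomorphism label, not the PSCA property itself.
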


\begin{corollary}
\label{cor:alllambda}
\mbox{}
\begin{enumerate}[$(i)$]
\item
For each $(n,k) \in \{ (5,3), (6,3), (7,3), (7,4) \}$, there exists a $\PSCA(n,k,\lambda)$ if and only if $\lambda \ge 2$.
\item
There exists a $\PSCA(8,3,\lambda)$ if and only if $\lambda \ge g(8,3)$, and $g(8,3) \in \{2,3\}$.
\end{enumerate}
\end{corollary}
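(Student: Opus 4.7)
The plan is to assemble the conclusion from building blocks already available in the paper, with Lemma \ref{lem:gto2g-1} serving as the bridge between a finite list of constructions and the claim for all $\lambda \ge g(n,k)$. Recall that Lemma \ref{lem:gto2g-1} reduces the ``for all $\lambda \ge g$'' conclusion to exhibiting a $\PSCA(n,k,\lambda)$ for each $\lambda \in \{g, g+1, \dots, 2g-1\}$, because Lemma \ref{lem:union} lets us take multiset unions to form a $\PSCA(n,k,\lambda)$ for every larger~$\lambda$ by writing $\lambda = qg + r$ with $0 \le r < g$.

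For part $(i)$, each of the pairs $(5,3), (6,3), (7,3), (7,4)$ satisfies $g(n,k) = 2$ by Example \ref{ex:532} and Propositions \ref{prop:newg-gen} and \ref{prop:newg-subgroup}. Applying Lemma \ref{lem:gto2g-1} with $g=2$ requires only the existence of a $\PSCA(n,k,2)$ and a $\PSCA(n,k,3)$ for each pair. I would list these off directly: the $\PSCA(n,k,2)$ are already on record (Example \ref{ex:532}, Proposition \ref{prop:newg-gen}$(i)$, Proposition \ref{prop:newg-gen}$(ii)$, and Proposition \ref{prop:newg-subgroup}$(i)$ respectively), and the $\PSCA(n,k,3)$ are supplied by parts $(i)$--$(iv)$ of Proposition \ref{prop:x33}.

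For part $(ii)$, I first recall that $g(8,3) \in \{2,3\}$ is Proposition \ref{prop:newg-subgroup}$(iii)$, so only the ``if'' direction remains. I would split on the value of $g(8,3)$. If $g(8,3) = 3$, then Lemma \ref{lem:gto2g-1} with $g=3$ needs a $\PSCA(8,3,\lambda)$ for $\lambda \in \{3,4,5\}$, and these are provided by Proposition \ref{prop:newg-subgroup}$(iii)$, Proposition \ref{prop:x33}$(v)$, and Proposition \ref{prop:x33}$(vi)$. If $g(8,3) = 2$, then Lemma \ref{lem:gto2g-1} with $g=2$ only requires a $\PSCA(8,3,2)$ (which exists by assumption) together with the $\PSCA(8,3,3)$ of Proposition \ref{prop:newg-subgroup}$(iii)$. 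Either way, the conclusion follows.

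There is no substantive obstacle here: the proof is a matching exercise between the $(n,k,\lambda)$ constructions already produced and the hypotheses of Lemma \ref{lem:gto2g-1}. The only thing to be slightly careful about is that for part $(ii)$ the list $\{g,\dots,2g-1\}$ depends on the (unknown) value of $g(8,3)$, which is why one must verify both cases $g=2$ and $g=3$; fortunately Proposition \ref{prop:x33} covers $\lambda \in \{3,4,5\}$, which is exactly the union of $\{3\}$ (the block needed when $g=2$, beyond the $\PSCA(8,3,2)$ that exists by definition of $g$) and $\{3,4,5\}$ (the block needed when $g=3$).
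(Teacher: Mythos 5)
Your proposal is correct and follows essentially the same route as the paper, which likewise combines the $\lambda=2$ constructions, the $\lambda=3$ (and, for $(8,3)$, the $\lambda=4,5$) examples of Proposition~\ref{prop:x33}, and Lemma~\ref{lem:gto2g-1} to conclude, noting that the argument works regardless of whether $g(8,3)=2$ or $g(8,3)=3$. Your explicit case split on the value of $g(8,3)$ in part $(ii)$ is just a more detailed spelling-out of the paper's remark to that effect.
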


\subsection{Examples of a group-based $\PSCA(n,n-1,1)$}
\label{sec:levgroup}

\cref{alg:subgroup} finds the following examples of a $\PSCA(n,n-1,1)$ as a union of $\frac{(n-1)!}{|G|}$ left cosets of a nontrivial subgroup $G$ of~$S_n$, for each $n \in \{4,5,6,7\}$. This suggests the possibility of a group-based proof of \cref{thm:lev}, as an alternative to Levenshtein's coding-theoretic proof. 

\begin{proposition}
The following sets of $(n-1)!$ sequences (listed only by reference to cosets) form a $\PSCA(n,n-1,1)$ as a union of $\frac{(n-1)!}{|G|}$ left cosets of the subgroup $G$ of~$S_n$. 
\begin{enumerate}[$(i)$]

\item
%

$n=4$ and the subgroup $G = \langle 3412 \rangle \cong C_2$ of~$S_4$:
\[
\begin{array}{*{3}{c}}
1234\,G & 1432\,G & 2413\,G
\end{array}
\]

\item
%

$n=5$ and the subgroup $G = \langle 34125, 43215 \rangle \cong C_2 \times C_2$ of~$S_5$:	
\[
\begin{array}{*{6}{c}}
12345\,G & 13254\,G & 14253\,G & 15243\,G & 15342\,G & 51432\,G 
\end{array}
\]

\item
%

$n=6$ and the subgroup $G = \langle 125634, 346521, 345612 \rangle \cong S_4$ of~$S_6$:
\[
\begin{array}{*{5}{c}}
123456\,G & 132546\,G & 132645\,G & 135642\,G & 136524\,G 
\end{array}
\]

\item

$n=7$ and the subgroup $G = \langle 7235461, 1756432 \rangle \cong S_4$ of~$S_7$:
\[
\begin{array}{*{6}{c}}
1234657\,G & 1235476\,G & 1237456\,G & 1273564\,G & 1324576\,G & 1325467\,G \\ 
1326475\,G & 1326574\,G & 1342756\,G & 1345267\,G & 1352764\,G & 1356427\,G \\
1357246\,G & 3124756\,G & 3125746\,G & 3126745\,G & 3127654\,G & 3145627\,G \\
3154267\,G & 3154762\,G & 3156742\,G & 3412657\,G & 3415276\,G & 3421567\,G \\
3425176\,G & 3426715\,G & 3427651\,G & 3456172\,G & 3457126\,G & 3457621\,G
\end{array}
\]

\end{enumerate}
\end{proposition}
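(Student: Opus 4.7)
The proposition is a purely existential claim supported by four explicit constructions, so the proof reduces to direct verification of each case. My plan is to treat the four cases $n \in \{4,5,6,7\}$ in turn, and for each to check three items: (a) the stated generators define a subgroup $G \le S_n$ of the claimed isomorphism type; (b) the listed coset representatives index pairwise distinct left cosets of $G$, accounting for exactly $(n-1)!$ permutations in total; and (c) the union $P$ of the listed cosets satisfies $\rv_{n-1}(P) = (1,1,\dots,1)$. By the remark following \cref{def:rvnseq}, condition (c) is equivalent to $P$ being a $\PSCA(n,n-1,1)$, which is the whole content of the proposition.

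For (a), I would enumerate the subgroup generated by the stated elements by closing under composition, read off $|G|$, and match the claimed isomorphism type ($C_2$, $C_2 \times C_2$, or $S_4$) by the multiset of element orders together with abelianness. For (b), I would verify $\pi_j^{-1}\pi_i \notin G$ for each pair of distinct listed representatives $\pi_i,\pi_j$; together with the count of $\frac{(n-1)!}{|G|}$ listed representatives and the necessary condition $|P| = k!\lambda = (n-1)!$ noted in \cref{sec:prev}, this ensures $P$ has the right size and (since $\lambda=1$ forces $P$ to be a set) that the cosets partition $P$. For (c), I would assemble the rows of the left $(n,n-1)$-incidence matrix for $G$ from \cref{def:IMgroup} indexed by the listed representatives, and confirm that summing these rows yields the all-ones vector of length $\frac{n!}{(n-k)!}=n!$.

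The hard part is not conceptual but computational: while $n=4$ and $n=5$ (with $|S_{n,n-1}| = 24$ and $120$) admit hand verification, the cases $n=6$ and $n=7$ involve $720$ and $5040$ ordered $(n-1)$-subsequences respectively, making direct checking practical only with the same GAP-based infrastructure used to search for these objects in \cref{sec:alg-desc}. A more satisfying alternative, which I would pursue as future work rather than as part of the proof, would be to explain the existence of such group-based $\PSCA(n,n-1,1)$ structurally --- for instance by relating each $G$ to the automorphism group of a block in Levenshtein's partition used in \cref{thm:lev} --- but this lies outside the purely verificational scope of the present proposition.
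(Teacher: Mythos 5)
Your proposal is correct and matches the paper's approach: the paper offers no written proof for this proposition, since the examples were produced and verified computationally by \cref{alg:subgroup} using exactly the incidence-matrix/repetition-vector machinery you describe. Your checklist (subgroup closure and isomorphism type, distinctness of cosets, and $\rv_{n-1}(P)$ equal to the all-ones vector) is precisely the verification the paper relies on.
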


\section{Open problems}
\label{sec:open}
We have established new values and bounds for $g(n,k)$, as shown in \cref{tab:gvalues-updated}.
We have established the following new parameter sets for a $\PSCA(n,k,\lambda)$: 
$(5,3,3)$, $(6,3,3)$, $(7,3,3)$, $(7,4,3)$, $(8,3,4)$, and $(8,3,5)$ (see \cref{sec:newparams}).
We have given an example of a $\PSCA(n,n-1,1)$ as a union of left cosets of a nontrivial subgroup of~$S_n$ for $n \in \{4,5,6,7\}$ (see \cref{sec:levgroup}).
Examples of a $\PSCA(n,k,\lambda)$ formed as a union of distinct left cosets of a nontrivial subgroup appear to be widespread,
and prescribing this structure brings into reach various searches that would otherwise be intractable.

We propose several open problems arising from our results.
\begin{enumerate}[$(i)$]
\item
Determine further exact values and bounds for $g(n,k)$.

\item
Find a $\PSCA(n,k,\lambda)$ for new parameter sets $(n,k,\lambda)$.

\item
Find a group-based construction for a $\PSCA(n,n-1,1)$ for each $n\ge 2$.

\item
Is there a parameter set $(n,k,\lambda)$ for which a $\PSCA(n,k,\lambda)$ exists but there is no example that is a union of left cosets of a nontrivial subgroup of~$S_n$?

\item
Resolve \cref{question:charlie} by determining whether the existence of a $\PSCA(n,k,\lambda)$ implies the existence of a $\PSCA(n,k,\lambda+1)$.

\item 
The recursive search methods presented here appear to encounter memory constraints when attempting to settle the smallest open case of \cref{conj:mvtequiv}, namely whether $g(9,7) > 1$.
 Are there theoretical techniques or improved search methods for handling this case?

\item
Find more combinatorial nonexistence results for perfect sequence covering arrays.
\end{enumerate}

\section*{Comments}
The authors thank Dan Gentle and Ian Wanless for kindly sharing a preprint of the paper~\cite{gentle-wanless:psca}, which describes how they used different methods from ours to computationally determine existence and nonexistence results for perfect sequence covering arrays that complement ours.
They independently showed that $g(6,3)=g(7,3)=2$.
They also recovered some of the results originally reported in~\cite{na-masters}, in particular that $g(7,4)=2$. 
They further established two results that our methods were not able to find: $g(8,3)>2$ and $g(8,4)>2$, the first of which combines with the $\PSCA(8,3,3)$ of \cref{prop:newg-subgroup} to show that $g(8,3)=3$.
Conversely, our methods found results that were not obtained in \cite{gentle-wanless:psca}, including that $g(7,5) \le 4$ and $g(9,3) \le 4$.
The paper \cite{gentle-wanless:psca} also gives various bounds on the value of $g(n,k)$ that arise from examples of perfect sequence covering arrays comprising a complete subgroup of~$S_n$.

The authors gratefully acknowledge helpful discussions with Karen Meagher and Charlie Colbourn.


\end{document}